\DeclareMathAlphabet\mathbfcal{OMS}{cmsy}{b}{n}
\renewcommand*{\backrefalt}[4]{%
    \ifcase #1 \footnotesize{(not cited)}%
    \or        \footnotesize{(cited on page~#2)}%
    \else      \footnotesize{(cited on pages~#2)}%
    \fi}
\newcommand{\BEAS}{\begin{eqnarray*}}
\newcommand{\EEAS}{\end{eqnarray*}}
\newcommand{\BEA}{\begin{eqnarray}}
\newcommand{\EEA}{\end{eqnarray}}
\newcommand{\BEQ}{\begin{equation}}
\newcommand{\EEQ}{\end{equation}}
\newcommand{\BIT}{\begin{itemize}}
\newcommand{\EIT}{\end{itemize}}
\newcommand{\BNUM}{\begin{enumerate}}
\newcommand{\ENUM}{\end{enumerate}}
\newcommand{\BA}{\begin{array}}
\newcommand{\EA}{\end{array}}
\newcommand{\diag}{\mathop{\rm diag}}
\newcommand{\Diag}{\mathop{\rm Diag}}
\newcommand{\tr}{\mathop{ \rm tr}}
\newcommand{\idm}{I}
\newcommand{\rb}{\mathbb{R}}
\newcommand{\zb}{\mathbb{Z}}
\newcommand{\BlackBox}{\rule{1.5ex}{1.5ex}}  % end of proof
\newenvironment{proof}{\par\noindent{\bf Proof\ }}{\hfill\BlackBox\\[2mm]}
\newtheorem{lemma}{Lemma}
\newtheorem{theorem}{Theorem}
\newtheorem{proposition}{Proposition}
\newcommand{\mysec}[1]{Section~\ref{sec:#1}}
\newcommand{\eq}[1]{Eq.~(\ref{eq:#1})}
\newcommand{\myfig}[1]{Figure~\ref{fig:#1}}
\def \E{{\mathbb E}}
\def \X{{\mathcal X}}
\def \V{{\mathcal V}}
\def \Y{{\mathcal Y}}
\def \K{{\mathcal K}}
 \newcommand{\bimat}[4]{\bigg(  \! \! \begin{array}{c@{\hspace{2mm}}c} {#1} &  {#2} \\ {#3} &  {#4} \end{array}   \! \! \bigg)}
\newcommand{\Hsquare}{%\ 
  \  \text{\fboxsep=-.2pt\fbox{\rule{0pt}{1ex}\rule{1ex}{0pt}}} \ % \ 
}
\def \ds{\displaystyle}
\title{Sum-of-squares relaxations for polynomial \\
min-max problems over simple sets}
\author{Francis Bach\\
Inria,  Ecole Normale Sup\'erieure \\
PSL Research University \\
{\small \url{francis.bach@inria.fr}}}
\date{\today}
\begin{document}
\maketitle

   \begin{abstract}
   We consider min-max optimization problems for polynomial functions, where a multivariate polynomial is maximized with respect to a subset of variables, and the resulting maximal value is minimized with respect to the remaining variables. When the variables belong to simple sets (e.g., a hypercube, the Euclidean hypersphere, or a ball), we derive a sum-of-squares formulation based on a primal-dual approach. In the simplest setting, we provide a convergence proof when the degree of the relaxation tends to infinity and observe empirically that it can be finitely convergent in several situations. Moreover, our formulation leads to an interesting link with feasibility certificates for polynomial inequalities based on Putinar's Positivstellensatz.
    \end{abstract}
   
 \section{Introduction}
 In this paper, we consider min-max optimization problems of the form
\BEQ 
\label{eq:minmax} \min_{x \in \X}\   \max_{y \in \Y} \ g(x,y),
\EEQ
where $\X$ and $\Y$ are compact sets and $g$ is a continuous function. Throughout the paper, like \cite{lasserre2011min}, we will assume that $g$ is a multivariate polynomial. Among particular cases, a finite set $\Y$ leads to the minimization of the maximum of multivariate polynomials, which is typically not a polynomial function. Thus min-max problems extend the reach of polynomial optimization and have applications in several areas, such as robust optimization~\cite{ben2009robust}. Note that we do not consider saddle-point problems where polynomial optimization has already been studied~\cite{nie2021saddle}.

We will consider algorithms based on the sum-of-squares principle~\cite{lasserre2001global,parrilo2003semidefinite}. This problem has been looked at by \cite{lasserre2011min}, which models the function $x \mapsto \max_{y \in \Y} g(x,y)$ by a polynomial, as an upper-bound that is tightly converging as the degree of the approximant increases, but slowly and in most interesting cases non finitely. This bound is then minimized in a two-stage approach, which can deal with a set~$\Y$ which can be defined through polynomial inequalities. In this paper, we will need to assume that both sets $\X$ and~$\Y$ are ``simple'', in a sense to be defined in \mysec{sos}. This includes the regular hypercube, the Boolean hypercube, the unit Euclidean sphere or ball, and all Cartesian products of such sets. However, we will consider a one-stage primal-dual approach that is often finitely convergent (although we currently do not have any provable sufficient conditions).

\paragraph{Paper outline.}
We review SOS relaxations over simple sets in \mysec{sos} and present our SOS formulation for the min-max problem in \mysec{sosminmax}, together with algorithms based on kernels and a convergence proof, while in \mysec{experiments}, we perform illustrative experiments. We start by presenting in \mysec{duality} the duality principles that underlie our formulations, which apply beyond polynomials.

\section{Primal-dual formulations}
We first consider the classical primal-dual formulation of minimization problems, before extending it to min-max problems.
In this section, we consider continuous functions (not necessarily polynomials).

\label{sec:duality}

\subsection{Minimization problems}
\label{sec:dumin}
Given a continuous function $f$ defined on a compact set $\X$, minimizing $f$ can be cast as the minimization of 
$\int_\X f(x) d\mu(x)$ over $\mu \in \mathcal{P}(\X)$, the set of probability distributions on $\X$, that is,
\BEQ
\label{eq:sosconvdual}
\min_{x \in \X} f(x) = \min_{\mu \in \mathcal{P}(\X)} \ \int_\X f(x) d\mu(x),
\EEQ
where the minimizer is any measure supported on the minimizers of $f$.
Introducing the notation $\mathcal{M}(\X, Q)$ for the set of finite measures with values in the cone $Q$, we can see probability distributions as the elements of $\mathcal{M}(\X, \rb_+)$ such that $\int_\X d\mu(x) = 1$. Introducing a Lagrange multiplier $c \in \rb$ for this linear constraint, we get by convex duality:
\BEA
\notag \min_{\mu \in \mathcal{P}(\X)} \ \int_\X f(x) d\mu(x)
& = & \max_{c \in \rb} 
\inf_{\mu \in \mathcal{M}(\X, \rb_+)}\ \int_\X f(x) d\mu(x) + c \Big( \int_\X d\mu(x) - 1 \Big)
\\
\label{eq:sosconvprim}& = & \max_{c \in \rb} c\ \mbox{ such that } \ \forall x \in \X, \ f(x) -c \geqslant 0,
\EEA
which is equivalent to finding the largest minorant of $f$ (and thus provides a direct proof of strong duality). As shown in \mysec{sos}, these two equivalent formulations lead to equivalent SOS relaxations, by replacing non-negative functions by sums-of-squares in \eq{sosconvprim}, and representing probability measures by their moments and ``pseudo''-moments in \eq{sosconvdual}. We now extend these equivalent formulations to min-max problems.

\subsection{Min-max problems}
 \label{sec:duminmax}
We now consider primal-dual interpretations for the original problem in \eq{minmax}, akin to \eq{sosconvdual} and \eq{sosconvprim} in \mysec{dumin} above, for a continuous function $g: \X \times \Y \to \rb$.

For the outer minimization problem in $x \in \X$, we consider the probabilistic formulation from \eq{sosconvdual}, and we thus have the equivalent formulation:
\[
\min_{x \in \X}\  \max_{y \in \Y} \ g(x,y) = \min_{\mu \in \mathcal{P}(\X)} \int_\X  \Big( \max_{y \in \Y} \ g(x,y)  \Big) d\mu(x).
\]
For the inner maximization problem in $y \in \Y$, which is different for every $x \in \X$, we consider probability measures $\nu(\cdot|x) \in \mathcal{P}(\Y)$ (the set of probability measures on $\Y$), and apply the same reformulation, to obtain
\BEQ
\label{eq:F1}
\min_{x \in \X} \ \max_{y \in \Y} \ g(x,y) 
=
\min_{ \mu \in \mathcal{P}(\X)}\  \max_{ \nu: \X \to \mathcal{P}(\Y)} \ 
\int_\X \int_\Y g(x,y) d\nu(y|x)  d\mu(x).
\EEQ
This is now a convex-concave min-max problem in infinite dimensions (while the original one in \eq{minmax} is typically not), with a bilinear objective and two convex domains, for which min and max can be swapped~\cite{sion1958general,jahn2020introduction}.
  We can now use convex duality to obtain either a minimization problem or a maximization problem.

We have, from \eq{F1}, by adding the Lagrange multiplier $c \in \rb$ for the constraint  $\ds \int_\X d\mu(x) = 1$:
\BEA
\notag
\min_{x \in \X} \ \max_{y \in \Y} \ g(x,y) 
 &=  & \min_{ \mu \in \mathcal{P}(\X)} \max_{ \nu: \X \to \mathcal{P}(\Y), \ c \in \rb}
\int_\X \int_\Y g(x,y) d\nu(y|x)  d\mu(x) + c \Big( 1-\int_\X d\mu(x)  \Big)\\
\label{eq:33} & = & \!\!\!\!\max_{ \nu: \X \to \mathcal{P}(\Y), \ c \in \rb} c\ \mbox{ such that } \ \forall x\in \X,   \int_\Y g(x,y) d\nu(y|x)  \geqslant c,
\EEA
which is a \emph{maximization} problem.

Alternatively, by convex duality, this equal to,  introducing in \eq{F1} a Lagrange multiplier $ \lambda \in \mathcal{M}(\X,\rb)$ for the constraint that $\forall x \in \X$, $\int_\Y d\nu(y|x) = 1$~\cite{jahn2020introduction}:
 \[
  \min_{\mu \in \mathcal{P}(\X) ,  \ \lambda \in  \mathcal{M}(\X,\rb)  } \ \max_{ \nu: \X \to \mathcal{M}(\Y,\rb_+)}
  \int_\X \Big(  \int_\Y g(x,y) d\nu(y|x)    \Big) d\mu(x) + \int_\X \bigg( 1 - \int_\Y d\nu(y|x) \bigg) d\lambda(x)  ,
 \]
which is equal to
\BEQ
\label{eq:otd} \min_{ \mu \in \mathcal{P}(\X), \   \lambda \in  \mathcal{M}(\X,\rb)  } \int_\X  d\lambda(x) \ \mbox{ such that } \ \forall  y \in \Y, \  \lambda \geqslant g(\cdot,y) \mu,
 \EEQ
 which is another convex formulation as a \emph{minimization} problem.

Overall we get three formulations which are all equivalent to the original problem (in \mysec{sosminmax}, our SOS formulation will also have these three equivalent formulations):
\BIT
\item \textbf{Minimization}, corresponding to \eq{minSOSpb} and \eq{minmin} in \mysec{sosminmax}:
\BEQ
\label{eq:min} \min_{ \mu \in \mathcal{P}(\X), \   \lambda \in  \mathcal{M}(\X,\rb)  } \int_\X  d\lambda(x) \ \mbox{ such that } \ \forall  y \in \Y, \  \lambda \geqslant g(\cdot,y) \mu.
 \EEQ
   
\item \textbf{Maximization}, corresponding to \eq{maxSOSpb} and \eq{maxmax} in \mysec{sosminmax}:
\BEQ
\label{eq:max}
\max_{ \nu: \X \to \mathcal{P}(\Y), \ c \in \rb} c\ \mbox{ such that } \ \forall x\in \X,   \int_\Y g(x,y) d\nu(y|x)  \geqslant c.
\EEQ
  
\item \textbf{Saddle-point}, corresponding to \eq{fullSOS} and \eq{fullfull} in \mysec{sosminmax}:
\BEQ
\label{eq:minmaxfull} \min_{ \mu \in \mathcal{P}(\X)} \max_{ \nu: \X \to \mathcal{P}(\Y)}
\int_\X \int_\Y g(x,y) d\nu(y|x)  d\mu(x).
\EEQ
\EIT

\paragraph{Non-convex formulation.}
By writing $d\lambda(x) = a(x) d\mu(x)$  for a certain function $a: \X \to \rb$, which is only possible for a dense subset of $  \mathcal{M}(\X,\rb)$, we get  an equivalent reformulation
\BEA
\label{eq:ncf}
\min_{ \mu \in \mathcal{P}(\X), \ a: \X \to \rb} \int_\X a(x) d\mu(x) \ \mbox{ such that } \ \forall (x,y) \in \X \times \Y, \ a(x) \geqslant g(x,y) ,\EEA
which is a non-convex formulation because the objective is non-convex. An alternating minimization algorithm starting from a measure $\mu$ with full support leads to the global optimum after one minimization with respect to $a$ (leading to $a(x) = \max_{y \in \Y} g(x,y)$), and then one minimization with respect to $\mu$ (leading to the minimizer of this function $a$).
When using SOS formulations for these two operations,  we exactly obtain the formulation of \cite{lasserre2011min} (see \eq{sosex}  in \mysec{sosex}).

 \paragraph{Optimal solutions.}
 With $c_\ast$ being the optimal value of \eq{minmax}, the optimal measure $\mu \in \mathcal{P}(\X)$ is any measure supported on the minimizers of $  x \mapsto \max_{y \in \Y} g(x,y)$.
 The optimal $\nu: \X \to \mathcal{P}(\Y)$ is such that for all $x \in \X$, $  \int_\Y g(x,y) d\nu(y|x)  \geqslant c_\ast$
 with equality at any minimizer~$x_\ast \in \X$. Therefore, at all minimizers $x_\ast \in \X$, we need $\nu(\cdot|x_\ast)$ to put mass only at maximizers of $y \mapsto g(x_\ast,y)$, but  this is \emph{not} required at other positions.
The optimal $\lambda$ is equal to an optimal $\mu$ times $\max_{y \in \Y} g(x,y)$.

\section{SOS relaxations for polynomials over simple sets}
\label{sec:sos}
In this section, we review existing work on minimizing polynomial functions over simple sets, which we cast as minimizing a quadratic form $f(x) = \varphi(x)^\top F \varphi(x)$ for a feature map $\varphi: \X \to \rb^m$, where $\X$ is a compact set. While we use specific notations that will make further developments easier to describe, this section follows the classical SOS formulations (see~\cite{lasserre2010moments,henrion2020moment} for a thorough review).

We use the denomination ``simple set'' to refer to a set $\X$ coming with its feature map $\varphi:\X \to \rb^m$ \emph{with unit norm}, that is, $\| \varphi(x) \|^2 = 1$ for all $x \in \X$ (for the Euclidean norm), and, which can be represented (potentially after transformation) as a multivariate polynomial (this thus imposes that $\X$ is a subset of $\rb^d$ for a specific $d$).

We will always assume that the constant mapping and the identity mapping $x \mapsto x$ can be obtained as a linear function of $\varphi(x)$ (this will be useful in recovering maximizers in \mysec{practical}). Moreover, we will only need to access the positive-definite kernel function $k: \X \times \X \to \rb$ defined as $k(x,y) =  \varphi(x)^\top \varphi(y)$ (and not access to the vector $\varphi$). Our unit norm normalization on $\varphi$ translates to $k(x,x)=1$ for all $x \in \X$.

We assume that the dimension of the span  of all $\varphi(x)$, $x \in \X$ is $m$, while the dimension of the span $\V_\varphi$ of all  $\varphi(x) \varphi(x)^\top \in \rb^{m \times m}$, $x \in \X$, is $m' \in [m, m(m+1)/2]$. Finally, we assume we can generate (typically, randomly) $m'$ points $x_1,\dots,x_{m'}$, such that $\varphi(x_i) \varphi(x_i)^\top $, $i=1,\dots,m'$, is a basis of~$\V_\varphi$.

The optimization problem and our solution will be invariant by invertible linear transformations, and we can choose the feature map so that the kernel is as simple as possible (note, however, that in terms of conditioning of the associated numerical linear algebra, some kernels are better than others).

All of our examples will be (subsets of) Euclidean unit spheres or products of  Euclidean spheres.

\subsection{Examples}
\label{sec:examples}
We will consider the following sets, feature maps, and kernel functions. Since our relaxations are based on approximating non-negative polynomials as sums-of-squares, that is, positive semi-definite quadratic forms in $\varphi$, we describe these SOS polynomials for some instances.

\BIT
\item \textbf{Discrete data}: $\X = \{1,\dots,m\}$ with orthonormal features $\varphi: \X \to \rb^m$, defined as $\varphi(x)_i = 1_{x=i}$. The corresponding kernel is $k(x,x') = 1_{x=x'}$, with $m'=m$.

\item \textbf{Trigonometric polynomials on $[0,1]$}: $\X \in [0,1]$ with $\varphi(x)_\omega = \frac{1}{(2r+1)^{1/2}} e^{2i \pi \omega x}$ for $\omega \in \{-r,\dots,+r\}$.\footnote{This feature is complex-valued but equivalent real-valued formulations with cosines and sines could be used. Since we only use kernel formulations, we do not need to pursue them explicitly.} The kernel is $ k(x,x') = \frac{\sin[ (2r+1) \pi(x-x')]}{(2r+1) \sin \pi (x-x') }$. This can be equivalently represented in the unit Euclidean sphere in $\rb^2$ with the bijection $\theta \mapsto (\cos 2\pi \theta, \sin 2\pi \theta)$, where the corresponding feature map spans all bivariate polynomials of degree $r$, with a kernel that can be taken to be equal to $k(y,y') = \frac{1}{2^r} (1 + y^\top y' )^{r}$ for $y,y' \in \rb^2$ of unit norm (we could construct one with Chebyshev polynomials to get the exact equivalence with the kernel above). We then have $m=2r+1$ and $m'=4r+1$.

\item \textbf{Polynomials on $[-1,1]$}: this is simply the projection of the case above by considering $y \in \rb^2$ such that $y_1^2+y_2^2 = 1$, and only considering functions of $y_1$. As shown in~\cite{bach2022exponential}, a polynomial in $y_1$ which is equal to an SOS polynomial on $y_1,y_2$  can be written as the sum $u(y_1) + (1-y_1^2) v(y_1)$ where $u$ and $v$ are univariate SOS polynomials.

\item \textbf{Hypersphere:} $\X = \{ x \in \rb^{d+1}, \ \|x\|_2^2 = x^\top x = 1\}$, with all functions that are multivariate polynomials of degree $r$. This corresponds to  $  m =    { d+r  \choose r}   +  { d+r-1 \choose r-1} $ and $  m' =    { d+2r  \choose 2r}   +  { d+2r-1 \choose 2r-1} $. We can choose the kernels $  k(x,x') = \frac{1}{r+1}  \sum_{i=0}^r \big(  {x^\top y} \big)^i$
or $  k(x,x') = \frac{1}{2^r} (1 + x^\top x' )^{r}$. We could also use generalized Legendre polynomials~\cite{frye} to get better-conditioned kernel matrices.

\item \textbf{Euclidean ball:} $\X = \{ x \in \rb^{d}, \ x^\top x  \leqslant 1\}$ can be seen as the projection of the hypersphere above to the first $d$ dimensions. When obtaining an SOS polynomial on the hypersphere, this translates for the Euclidean ball  
to a sum $u(x) + (1-\| x\|_2^2) v(x)$ where $u$ and $v$ are SOS polynomials.

\item \textbf{Products of one-dimensional spheres $\subset \rb^2$ $\Leftrightarrow$ trigonometric polynomials on $[0,1]^d$ $\Leftrightarrow$ regular polynomials on $[-1,1]^d$}: this is the tensor product of the univariate cases above; the kernel is then
$  k(y,y') = \prod_{i=1}^d \frac{1}{2^r} (1 + y_i^\top y_i' )^{r}$ for the polynomial representations, or alternatively
 $  k(x,x') =  \prod_{i=1}^d  \frac{\sin[ (2r+1) \pi(x_i-x_i')]}{(2r+1) \sin \pi (x_i-x_i') }$ for trigonometric polynomials. This then corresponds to multivariate polynomials of 
 \emph{maximal}\footnote{For a monomial $X_1^{\alpha_1} \cdots X_d^{\alpha_d}$, its degree is $\alpha_1+\cdots+\alpha_d$ and its \emph{maximal} degree is $\max\{\alpha_1,\dots,\alpha_d\}.$} degree~$2r$.
 As shown in~\cite{bach2022exponential}, a trigonometric SOS polynomial transferred to regular polynomials on $[-1,1]^d$ leads to a representation of Schmudgen's type~\cite{schmudgen2017moment}.
 
\item \textbf{Boolean hypercube $\X = \{-1,1\}^d$}: it can be seen as a sub-case of the hypersphere in dimension $d-1$ 
and radius $\sqrt{d}$, where quadratic forms are polynomials of degree $2r$.
We then have $m = \sum_{i=0}^{r} { d \choose i}$.
 \EIT

\subsection{Relaxation}

The SOS relaxation is obtained by first representing the minimization of $f$ as the maximization of a minorant~$c$ of $f$, that is, such that $f(x)-c \geqslant 0$ for all $x\in \X$, that is, \eq{sosconvprim} in \mysec{duality}. We then represent non-negative functions as sums-of-squares, that is, a positive semi-definite quadratic form in $\varphi(x)$,  thus solving:
\BEQ
\label{eq:primalc}
 \max_{c \in \rb, \ A \succcurlyeq 0 } \ \ c   \ \  \mbox{ such that }\   \forall x \in \X, \ f(x) = c + \varphi(x)^\top A \varphi(x).
\EEQ
It can be re-written using $\mathcal{V}_\varphi$ the span of all $\varphi(x) \varphi(x)^\top$, $ x \in \X$, and its orthogonal subspace $\mathcal{V}_\varphi^\perp$, as:
\BEA
\notag & &  \max_{c \in \rb, \ A \succcurlyeq 0 } \ \ c  \ \ \mbox{ such that }\ \ \forall x \in \X, \ \tr \big[ \varphi(x) \varphi(x)^\top ( F - c \idm - A) \big] = 0 \\
\notag & = & \max_{c \in \rb, \ A \succcurlyeq 0, \ Y \in   \mathcal{V}_\varphi^\perp } \  c \  \ \mbox{ such that }\ \  F - c \idm - A + Y = 0 , \ \mbox{ by definition of } \mathcal{V}_\varphi^\perp.
\EEA
We can then optimize out $c$ and $A$, by noticing that $c \in\rb$ is the largest $c$ such that $F + Y \succcurlyeq c \idm$, leading to the following spectral formulation
\BEA
\label{eq:primalSOS} &   & \max_{Y \in  \mathcal{V}_\varphi^\perp }\   \lambda_{\min}(F + Y).
\EEA
Its dual can be written as, using standard semi-definite programming duality~\cite{Boyd2004Convex}:
\BEA
\notag \max_{Y \in  \mathcal{V}_\varphi^\perp }\   \lambda_{\min}(F + Y)
\notag  & = & \min_{ \Sigma \succcurlyeq 0 } \ 
\max_{Y \in  \mathcal{V}_\varphi^\perp }\   \tr [ \Sigma(F + Y)] \ \mbox{ such that } \ \tr(\Sigma) = 1 \\
\label{eq:dualSOS} & = & \min_{ \Sigma \succcurlyeq 0 } \ 
   \tr ( \Sigma F )\ \mbox{ such that } \ \tr(\Sigma) = 1 , \ \Sigma \in \mathcal{V}_\varphi,
\EEA
which corresponds to an outer approximation of the convex hull of all $\varphi(x)\varphi(x)^\top$, $x \in \X$, by the set of positive semi-definite matrices such that 
$\tr(\Sigma) = 1$ and $\Sigma \in \mathcal{V}_\varphi$, which we denote $\widehat{\K}_\varphi$ and  which is an outer approximation of $\K_\varphi$, the closure of the convex hull of all $\varphi(x)\varphi(x)^\top$, $x \in \X$. This dual formulation corresponds to (a) replacing the minimization of $f$ by the minimization with respect to a probability measure on $\X$ of the expectation of $f$ with respect to that measure, as done in \mysec{dumin} in \eq{sosconvdual}, and (b) characterizing these measures by their expectations of $\varphi \varphi^\top$. Elements of $\K_\varphi$ are moment matrices while elements of $\widehat{\K}_\varphi$ are often referred to as ``pseudo-moment'' matrices.
  
   \subsection{Kernelization}
   \label{sec:kernelization}
  With an explicit description of $\V$, it may be cumbersome to implement the semi-definite program, particularly for larger input dimensions, leading to dedicated codes for each case. This is simpler with kernels, as described below. It allows accessing the function $f$ using only function values, like proposed by~\cite{lofberg2004coefficients}, with a direct link with positive definite kernels outlined by~\cite{rudi2020finding}. As we now show, this corresponds to representing the space $\V$ by a span of finitely many elements, leading to a representation of the moment matrices $\Sigma$ as a linear combination of rank-one matrices.

   We consider $m'$ ``well-positioned'' points $x_1,\dots,x_{m'} \in \X$, so that $\V$ is the span of all $\varphi(x_i)\varphi(x_i)^\top$, $i=1,\dots,m'$. Quasi-random sequences~\cite{morokoff1994quasi} are natural candidates, in particular, because we will extract below the first $m$ points and also need them to be well-spread to avoid ill-conditioning of the kernel matrices.
   
   For the primal formulation in \eq{primalc}, the constraint that  $\forall x \in \X, \ f(x) = c + \varphi(x)^\top A \varphi(x) $ is equivalently replaced by an equality only on $x_1,\dots,x_{m'}$. This corresponds to checking that two polynomials are equal by checking that they are equal on sufficiently many points.
   
 The dual formulation in 
 \eq{dualSOS} is then equivalent to:
   \[
   \inf_{ \alpha \in \rb^{m'}} \ \sum_{i=1}^{m'} \alpha_i f(x_i) \mbox{ such that } \sum_{i=1}^{m'} \alpha_i = 1 , \ \sum_{i=1}^{m'} \alpha_i \varphi(x_i) \varphi(x_i)^\top \succcurlyeq 0,
   \]
   which is only accessing the function $f$ through $m'$ function evaluations. The crucial point is that the vector $\alpha \in \rb^{m'}$ is not constrained to have non-negative values (otherwise, the formulation above would lead to $\min_{i \in \{1,\dots,m'\}} f(x_i)$).
   
   If $m$ is the dimension of $\varphi$, then from the kernel matrix $K \in \rb^{m \times m}$ associated with the first $m$ points, we build the ``empirical feature map'' as 
   $
   \tilde{\varphi}(x) = K ^{-1/2} ( k(x_i,x))_{i \in \{1,\dots,m\}}  \in \rb^m $,
   where $K ^{-1/2}$ is any inverse square root of $K \in \rb^{m \times m}$. 
   This defines an empirical feature matrix
   $\Phi  = L K^{-1/2} \in \rb^{m' \times m}$, where $L \in \rb^{m' \times m'}$ is the full kernel matrix of all $m'$ points. We then solve, equivalently,
    \BEQ
\label{eq:sdpsos}   \inf_{ \alpha \in \rb^{m'}} \ \sum_{i=1}^{m'} \alpha_i f(x_i) \mbox{ such that } \sum_{i=1}^{m'} \alpha_i = 1 , \ 
   \Phi^\top \diag(\alpha) \Phi\succcurlyeq 0,
   \EEQ
   and obtain a solution $  \Sigma = \sum_{i=1}^{m'} \alpha_i \varphi(x_i) \varphi(x_i)^\top$. We will see below how to obtain a candidate maximizer $x_\ast \in \X$ from $\Sigma$ without the need to compute $\varphi$.

  \paragraph{Going infinite-dimensional.} Solving \eq{sdpsos} will lead to the SOS relaxation if $f$ is indeed a quadratic form in $\varphi(x)$. In all our examples, the feature map is finite-dimensional. Still, we can go infinite-dimensional using positive definite kernels corresponding to infinite dimensional feature spaces, such as $k(x,x') = \exp( x^\top x')$, with an additional regularizer. See~\cite{rudi2020finding} for more details and convergence analysis.
   
   \subsection{Practical algorithms}
\label{sec:practical}
\paragraph{Solving the SDP.}   The problem in \eq{sdpsos} is a semi-definite program, which can either be solved using generic toolboxes, with complexity $(m')^{3.5}$~\cite{helmberg1996interior}. Adding a log-determinant barrier leads to an approximate algorithm with only matrix inversions of size $m$ and $m'$~\cite{rudi2020finding}, but no eigenvalue decompositions.

\paragraph{Obtaining rank-one solutions.}
   The obtained solution $\alpha \in \rb^{m'}$ of \eq{sdpsos} may not lead to a rank-one matrix $  \Sigma = \sum_{i=1}^{m'} \alpha_i \varphi(x_i) \varphi(x_i)^\top$
   when the minimization problem has several minimizers or the relaxation is not tight. We can obtain a lower-rank solution (and rank-one when the relaxation is tight) by minimizing a random linear function of~$\alpha$ over all~$\alpha$ that are minimizers of \eq{sdpsos}. Rank-minimization heuristics could also be used~\cite{fazel2001rank}.

\paragraph{Obtaining candidates for $x_\ast$.} Once the vector $\alpha$ is obtained such that the matrix $\Sigma = \sum_{i=1}^{m'} \alpha_i \varphi(x_i) \varphi(x_i)^\top$ has rank one, we can simply obtain the corresponding $x_\ast \in \X$ exactly as $x_\ast = \sum_{i=1}^{m'} \alpha_i x_i$. This is only approximate when $\Sigma$ does not have rank one.
See also~\cite{henrion2005detecting}.
  
  \subsection{Tightness guarantees}
\label{sec:sosguarantees}
For a small number of cases, we have $\widehat{\K}_\varphi = \K_\varphi$, that is, the relaxation is tight, e.g., for one-dimensional problems or with linear features (modeling quadratic polynomials). Otherwise, we need ``hierarchies''.

\paragraph{Hierarchies.}
For most cases, the relaxation is not tight, that is, $\widehat{\K}_\varphi \supsetneq \K_\varphi$, but we can see a $2r$-dimensional polynomial as an instance of a polynomial of degree less than $2s$, for $s>r$, and run the algorithm with the kernel corresponding to this larger dimensional space (which requires access to more function values since it leads to an increase in $m'$). This corresponds to using a relaxation $\widetilde{\K}_\varphi$ such that $ {\K}_\varphi  \subset  \widetilde{\K}_\varphi  \subset  \widehat{\K}_\varphi$, for which $\sup_{\Sigma \in \widetilde{\K}_\varphi} \inf_{ \Sigma' \in {\K}_\varphi} \| \Sigma - \Sigma' \|_{\rm F}$ is hopefully going to zero when the degree~$s$ goes to infinity, where $\| \cdot \|_{\rm F}$ denotes the Frobenius norm. This is the case for several of the simple sets in the examples above, with a rate in $O(1/s^2)$, for hyperspheres~\cite{fang2021sum}, polynomials on $[-1,1]^d$~\cite{laurent2022effective}, and trigonometric polynomials~\cite{bach2022exponential}. By increasing the degrees $s$ until approximating the global optimum arbitrarily well, we obtain a ``hierarchy'' of optimization problems.

More precisely, this corresponds to replacing $\varphi(x) \in \rb^m$ by $\tilde{\varphi}(x) = { \varphi(x) \choose \varphi^+(x) } \in \rb^{ \tilde{m}}$, and $F$ by $\tilde{F} = \bimat{F}{0}{0}{0}$, with the function $f$ defined by $f(x) = \varphi(x)^\top F \varphi(x) = \tilde{\varphi}(x)^\top \tilde{F} \tilde{\varphi}(x)$. The convergence results in~\cite{fang2021sum,laurent2022effective,bach2022exponential} correspond to the existence of $\varepsilon(\varphi,\varphi^+) >0$ such that
\[
\forall x \in \X, \ f(x) \geqslant \varepsilon(\varphi,\varphi^+) \| F\|_{\rm F} \ \ \Rightarrow \ \ \exists \tilde{A} \succcurlyeq 0, \ \forall x \in \X, \ f(x) = \tilde{\varphi}(x)^\top \tilde{A} \tilde{\varphi}(x).
\]
The constant $\varepsilon(\varphi,\varphi^+)$ can be chosen as $O(1/s^2)$, where $s$ is the degree of the polynomials defining $\tilde{\varphi}$.

Note that in practice, when using kernel formulations, using hierarchies simply means using a different kernel and more function evaluations.

\subsection{Matrix-valued SOS}
\label{sec:matrixSOS}
In \mysec{sosminmax}, we will need to consider functions from $\X$ to some subspace $\mathcal{T}$ of $\mathbb{S}_p$ (the set of symmetric matrices of dimension $p$), and use characterizations of functions $f: \X \to \mathcal{T}$ that are linear in $\varphi(x) \varphi(x)^\top$ and such that for all $x \in \X$, $f(x) \succcurlyeq 0$. We assume the identiy matrix $\idm$ belongs to $\mathcal{T}$.

This is an extension of the classical situation (where $p=1$). We denote by $F \in \rb^{mp \times mp}$ the linear form defined with blocks $F_{ij}$ of size $m \times m$, for $i,j \in \{1,\dots,p\}$, such that 
  \[
  f(x) = F[ \varphi(x) \varphi(x)^\top],
  \]
  which is defined as $\forall x \in \X, \ f(x)_{ij} = \varphi(x)^\top F_{ij} \varphi(x)$. The constraint that for all $x \in \X$, $f(x) \in \mathcal{T}$ is equivalent to $F \in  \V_\varphi^\perp \otimes \mathbb{S}_p + \mathbb{S}_m \otimes \mathcal{T}$. Following~\cite{scherer2006matrix,fang2021sum,muzellec2021learning}, a sufficient condition for the matrix-non-negativity of $f$ is $F \succcurlyeq 0$. 

The condition $F \succcurlyeq 0$ is also necessary for some special cases. Indeed, if $\mathcal{T}$ is the set of diagonal matrices, we are then simply looking at $p$ different non-negative polynomials, and if $\varphi$ is such that we have a tight scalar SOS representation of non-negative functions, the condition is indeed necessary.

For the cases where we had the tightness guarantees in \mysec{sosguarantees}, it turns out that we have similar tightness guarantees, that is, if $f$ is a degree $2r$ matrix-valued polynomials. We consider $\tilde{\varphi} = { \varphi \choose \varphi^+  } $ leading to polynomials of degree $2s$, then if $f$ has strictly positive-semidefinite values (that is, all eigenvalues greater than $\varepsilon$ times some norm of $f$), then $f$ is a matrix-SOS polynomial of degree $2s$. The constant~$\varepsilon$ can be taken as $O(1/s^2)$.

Indeed, all the proofs for hyperspheres~\cite{fang2021sum}, polynomials on $[-1,1]^d$~\cite{laurent2022effective}, and trigonometric polynomials~\cite{bach2022exponential} are based on the same integral operator idea from~\cite{fang2021sum} who showed how to extend it to the matrix domain. See a precise instance of such a result for trigonometric polynomials in Appendix~\ref{app:matrixSOS}.

\paragraph{Link between matrix-SOS to tensor products.} In the min-max problem in \mysec{sosminmax}, we will need to minimize quadratic forms in $\varphi(x) \varphi(x)^\top \in \rb^{m \times m}$, rather than in $\varphi(x) \in \rb^m$. Such a quadratic form is defined as $f(x) = \tr\big[ F ( \varphi(x) \varphi(x)^\top  \otimes \varphi(x) \varphi(x)^\top) \big]$, and if the matrix-valued function $G: x \mapsto F[ \varphi(x) \varphi(x)^\top]$ is such that $\forall x \in \X, \ G(x) \succcurlyeq c \idm$, then  
 $f(x)  = \varphi(x)^\top G(x) \varphi(x) \geqslant c$ for all $x \in \X$. Thus the threshold for scalar-valued quadratic forms in $\varphi(x)\varphi(x)^\top$ leads to (at least) the same threshold for matrix-valued quadratic forms in~$\varphi(x)$.

\section{SOS relaxations for min-max problems}
\label{sec:sosminmax}
  
We consider the min-max problem 
\BEQ
\label{eq:SP} \min_{x \in \X} \ \max_{y \in \Y} \ g(x,y),
\EEQ
for a continuous function $g: \X \times \Y \to \rb$ defined on the product of two compact sets $\X$ and $\Y$.
We assume that we have two feature maps $\varphi:\X \to \rb^m$  and $\psi: \Y \to \rb^p$, such that $\| \varphi(x)\|=\| \psi(y)\|=1$ for all $x \in \X$ and $y \in \Y$, thus within the framework of \mysec{sos}. While the motivation is polynomials, this is not needed in most of this section.

We assume that the function $g$ is a bilinear function of $\varphi \varphi^\top$ and $\psi \psi^\top$, that is,  of the form
\BEQ
\label{eq:A}
g(x,y) = \tr \Big[ G  \big( \psi(y)\psi(y)^\top \otimes \varphi(x)\varphi(x)^\top  \big) \Big],
\EEQ
for a symmetrix matrix $G \in \rb^{mp \times mp}$. 
By definition of the Kronecker product~\cite{golub83matrix}, we can see $G$ as matrix defined by blocks $G_{ij}$ of size $m \times m$, for $i,j \in \{1,\dots,p\}$, and \eq{A} can be rewritten as:
\[
g(x,y) = \sum_{i,j=1}^p \psi(y)_i \psi(y)_j \cdot \varphi(x)^\top G_{ij} \varphi(x).
\]
Because of our unit norm assumptions for the feature maps, this includes linear forms in $\varphi \varphi^\top$ and $\psi \psi^\top$ (e.g., by considering all $G_{ij}$ proportional to $\idm$, we obtain a linear form in $\psi \psi^\top$). For the examples in \mysec{sos}, such a representation exists for all multivariate polynomials in $x$ and $y$.

The goal of this paper is to design SOS methods for this problem. Note that they will sometimes not be relaxations per se, as their values will not always be lower bounds on optimal values.

\paragraph{Notations.}
Following~\mysec{sos}, we denote by $\V_\varphi \subset \rb^{m \times m}$ the span of all $\varphi(x) \varphi(x)^\top$, and $\K_\varphi  \subset \rb^{m \times m} $ the closure of its convex hull, with similar notations for $\V_\psi  \subset \rb^{p \times p} $ and $\K_\psi  \subset \rb^{p \times p} $.

The natural SOS formulation is to replace $\K_\varphi$ by $\widehat{\K}_\varphi = \{ S \in \V_\varphi, \ S \succcurlyeq 0, \ \tr(S) = 1\} \supset \K_\varphi$ and
$\K_\psi$ by $\widehat{\K}_\psi = \{ T \in \V_\psi, \ T \succcurlyeq 0, \ \tr(T) = 1\} \supset \K_\psi$.
When using hierarchies, we may use tighter sets $\widetilde{\K}_\varphi$ and $\widetilde{\K}_\psi$, which often corresponds to embedding $\varphi$ in a bigger feature map.

We will also need $\mathcal{K}_{\varphi \otimes \varphi} \in \rb^{m^2 \times m^2}$ corresponding to the hull of all $\varphi(x)^{\otimes 4} = 
\varphi(x)\varphi(x)^\top \otimes \varphi(x)\varphi(x)^\top$, $x \in \X$, as well as it outer approximation 
$\widehat{\mathcal{K}}_{\varphi \otimes \varphi}$.

 \subsection{Existing SOS relaxation} 
 \label{sec:sosex}
 The method of~\cite{lasserre2011min}, which applies more generally (in particular to sets $\Y$ which are not simple), corresponds to an SOS formulation for \eq{ncf}, for a fixed probability measure $\mu \in \mathcal{P}(\X)$ with full support, that is,
\[
 \min_{   a: \X \to \rb} \int_\X a(x) d\mu(x) \ \mbox{ such that } \ \forall (x,y) \in \X \times \Y, \ a(x) \geqslant g(x,y)
 ,
 \]
 and then the minimization of the resulting function $a$.
 
 It can be cast as follows with our notations. In a first stage, assuming that one can compute $\Sigma = \E [ \varphi(x) \varphi(x)^\top]$ for a distribution with full support on $\X$  (typically the uniform distribution), we solve
 \BEA
\label{eq:jb} 
\label{eq:sosex} & & \min_{ A \in \rb^{m \times m}, \ C \in \rb^{mp \times mp} }\tr (A \Sigma) \\
 \notag & & \mbox{ such that }
 C \succcurlyeq 0 \mbox{ and } \forall (x,y) \in \X \times \Y, \ g(x,y) = \varphi(x)^\top A \varphi(x) -  \tr \big[ C  \big( \psi(y)\psi(y)^\top  \otimes \varphi(x)\varphi(x)^\top  \big) \big],
 \EEA
 which approximates, with an SOS approach, the polynomial in $x$ with the smallest expectation, which is above $g(x,y)$ for all $(x,y) \in \X \times \Y$. In the second stage, this polynomial  $a$ defined by the matrix $A$ is minimized with an SOS method. This will converge when the degree of $a$ is allowed to increase but requires approximating a non-polynomial function by a polynomial function, which may require a large degree. Moreover, it is typically not finitely convergent.
 Note that we could also minimize with respect to~$\Sigma$ in \eq{jb}, but this leads to a non-convex problem. A natural algorithm for this non-convex problem is to perform alternating optimization, alternating between optimizing with respect to $A$ and $\Sigma$, which improves the result but is not globally convergent in general (see Appendix~\ref{app:alternate} for more details). Finally, if the polynomial defined by $A$ is minimized exactly, we obtain an upper-bound on the actual optimal value of \eq{SP}.

\paragraph{Kernelization.} Using notations from \mysec{sos},
 we can formulate the problem in \eq{jb} above as
  \BEAS
&& \min_{ A \in \rb^{m \times m}, \ C \in \rb^{mp \times mp} }\tr (A \Sigma) \mbox{ such that }
 C \succcurlyeq 0 \mbox{ and } G   - \idm \otimes A    +  C   \in (\V_\varphi \otimes \V_\psi)^\perp
 \EEAS
 by definition of the vector space $\V_\varphi \otimes \V_\psi$. We can then introduce a Lagrange multiplier $M \in \V_\varphi \otimes \V_\psi$ to obtain by convex duality:
 \BEAS
 &  & 
 \max_{M \in \V_\varphi \otimes \V_\psi} \ 
 \min_{ A \in \rb^{m \times m}, \ C \succcurlyeq 0 } \ \tr (A \Sigma)  + \tr \big[ M ( G   - \idm \otimes A     +  C  ) \big].
\EEAS
We can then optimize with respect to $C \succcurlyeq 0$, which leads to the constraint $M \succcurlyeq 0$, and with respect to $A$, which leads to a linear constraint, that is:
\BEAS
 &  & 
 \max_{M \in \V_\varphi \otimes \V_\psi} \tr ( M  G) \mbox{ such that } M \succcurlyeq 0 \mbox{ and } \widetilde{\tr}[M] =   \Sigma,
 \EEAS
 where  $\widetilde{\tr} [M] \in \rb^{m \times m} $ denotes the ``partial trace'' defined as $\tr( N \widetilde{\tr} [M]  ) = \tr( M(N \otimes \idm))$ for any matrix $N \in \rb^{ m \times m}$, and $\mathbb{S}_p$ denotes the set of symmetric matrices of size $p$.  In other words, $(\widetilde{\tr} [M])_{ij} = \tr( M_{ij})$.

 It can be kernelized like in \mysec{kernelization}, in particular in situations where $\Sigma = \frac{1}{m} \idm$, which is the case when using a uniform distribution and $\varphi$ obtained from orthonormal bases. Like in \eq{sdpsos}, we can then represent~$\Sigma$ as $\Sigma = \sum_{i=1}^{m'} \mu_i \varphi(x_i) \varphi(x_i)^\top$ for some $\mu \in \rb^{m'}$. We thus solve
 \BEQ
\label{eq:sosexk} \max_{ \alpha \in \rb^{m'  \times p'}} \sum_{i,j} \alpha_{ij} g(x_i,y_j) \mbox{ such that } \forall i, \sum_{j} \alpha_{ij} = \mu_i
 \mbox{ and } \sum_{i,j} \alpha_{ij} \varphi(x_i)\varphi(x_i)^\top \otimes \psi(y_j) \psi(y_j)^\top \succcurlyeq 0,
 \EEQ
 and we recover $\varphi(x_i)^\top A \varphi(x_i)$ from the Lagrange multiplier for the constraint $\sum_{j=1}^{p'} \alpha_{ij} = \mu_i$. This is sufficient to minimize $a(x) = \varphi(x)^\top A \varphi(x)$ with an SOS method like described in \mysec{sos}.
 
 \paragraph{Illustration.} We consider $\X = [0,1]$ and trigonometric polynomials, with $\Y = \{1,2,3\}$. Thus, we aim to minimize the maximum of three trigonometric polynomials, which we take to have a maximal degree of $2$. This is illustrated in \myfig{lasserre}, where we plot the three polynomials and the upper-bounding polynomial when using polynomials of degree $r$, with $r=2, 4, 8$, where we can see the slow and in general only asymptotic convergence.
 
 \begin{figure}
\begin{center}
 \includegraphics[width=14cm]{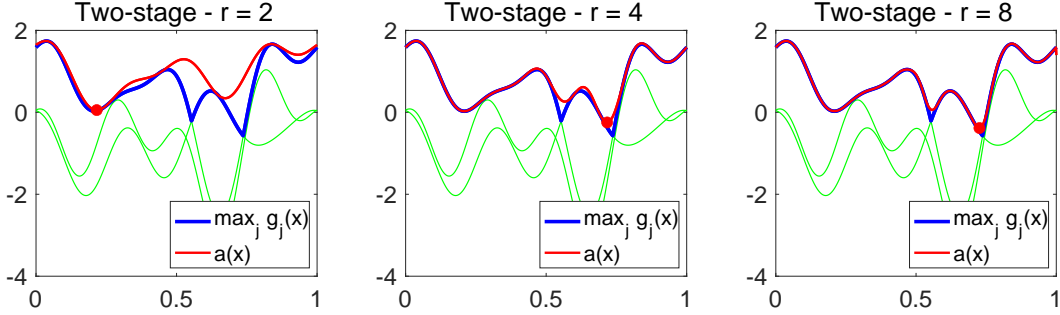}
\end{center}
\vspace*{-.45cm}

 \caption{Two-stage approach~\cite{lasserre2011min} for trigonometric polynomials in one dimension: three polynomials of maximal degree $2$ on $[0,1]$ (in green), with their maximum (in blue), and the upper-bounding polynomial (in red),  
 when using polynomials of degree $r$, with $r=2$ (left), $r=4$ (middle), and $r=8$ (right).
 \label{fig:lasserre}}
 \end{figure}

\subsection{Primal-dual SOS relaxation}
We consider the following ``exact'' reformulation already presented in \eq{F1}:
\[
\min_{x \in \X} \max_{y \in \Y} \ g(x,y) 
=
\min_{ \mu \in \mathcal{P}(\X)} \max_{ \nu: \X \to \mathcal{P}(\Y)}
\int_\X \int_\Y g(x,y) d\nu(y|x)  d\mu(x),
\]
where the maximization problem is replaced by the maximization of an expectation.
 Using the expression of $g$ in \eq{A}, we can then use the bi-linearity of $g$ and write the equation above as
  \[
  \min_{ \mu \in \mathcal{P}(\X)} \max_{ \nu: \X \to \mathcal{P}(\Y)}
\int_\X  \tr \bigg[ G \bigg( \int_\Y \psi(y)\psi(y)^\top d\nu(y|x)  \otimes \varphi(x) \varphi(x)^\top \bigg) \bigg]d\mu(x)
,  \]
  and thus
  as (with no approximation yet), with $\ds V(x) =  \int_\Y \psi(y)\psi(y)^\top d\nu(y|x)   \in \K_\psi$ (the hull of all $\psi(y)\psi(y)^\top$ for $y \in \Y$):
  \BEQ
\label{eq:B}  
\min_{x \in \X} \max_{y \in \Y} \ g(x,y) 
= \min_{ \mu \in \mathcal{P}(\X)} \max_{ V: \X \to \textcolor{red}{\K_\psi}}
\int_\X  \tr \big[ G \big(  V(x) \otimes \varphi(x) \varphi(x)^\top  \big) \big] d\mu(x).
  \EEQ
  We will now make a sequence of three relaxations to approximate the problem in \eq{B} above.
  
  \paragraph{Replacing $ {\mathcal{K}_\psi}$ by $\widehat{\mathcal{K}}_\psi$.}
  We first consider functions $V$ with values in $\widehat{\K}_\psi$ (which is computationally more manageable) instead of~${\K}_\psi$ (which may not), leading to
  \BEQ
\label{eq:C}  \min_{ \mu \in \mathcal{P}(\X)} \max_{ V: \X \to  \textcolor{red}{\widehat{\K}_\psi}}
\int_\X  \tr \big[ G \big( V(x)  \otimes \varphi(x) \varphi(x)^\top \big) \big] d\mu(x),
  \EEQ
  which is always greater or equal to the optimal value of \eq{B}.
  
  \paragraph{Parameterizing  $V$ by a matrix sum-of-squares.}
  The set $\widehat{\K}_\psi = \{ T \succcurlyeq 0, \ \tr(T) = 1, T \in \mathcal{V}_\psi\}$ has a PSD constraint. 
   Thus, as presented in \mysec{matrixSOS}, following~\cite{scherer2006matrix,fang2021sum,muzellec2021learning}, we can try to approximate it by a positive linear form in $\varphi(x) \varphi(x)^\top$ as
\[
V(x)  =T [  \varphi(x) \varphi(x)^\top ],
  \]
with $ T \in \rb^{m p \times mp }$ such that $T \succcurlyeq 0$, where, for $M \in \rb^{m\times m}$,  $T [M]$ denotes the symmetric matrix in $\rb^{ p \times p}$ such that for any symmetric matrix $N \in \rb^{p \times p}$, $\tr \big( 
T [M] N \big) = \tr \big[ T ( N \otimes M) \big]$. In other words, if $T$ is defined by blocks $T_{ij} \in \rb^{m \times m}$ for $i,j \in \{1,\dots,p\}$, then $T [  \varphi(x) \varphi(x)^\top ]_{ij} = \varphi(x)^\top T_{ij} \varphi(x)$.

 In order to impose that for all $x \in \X$,  $V(x) \in \V_\psi$ and $\tr [ V(x) ] = 1$, we add the additional affine constraints
 \[
  T \in  \V_\varphi^\perp \otimes \mathbb{S}_p + \mathbb{S}_m \otimes \V_\psi ,   \  \  \widetilde{\tr} [T]  - \idm \in \V_\varphi^\perp,
\]
where $\widetilde{\tr} [T] \in \rb^{m \times m} $ denotes the ``partial trace'' defined as $\tr( M \widetilde{\tr} [T]  ) = \tr( T( M \otimes \idm))$ for any matrix $M \in \rb^{ m \times m}$, and $\mathbb{S}_p$ denotes the set of symmetric matrices of size $p$.  In other words, $(\widetilde{\tr} [T]) _{ij} = \tr( T_{ij})$.

  We then obtain a problem where the measure $\mu$ only appears through the moment $\Sigma$ of $\varphi(x)^{\otimes 4} = \varphi(x)\varphi(x)^\top\otimes \varphi(x)\varphi(x)^\top \in \rb^{m^2 \times m^2}$, since we have, using properties of Kronecker products:
    \BEAS
  \int_\X  \tr \big[ G \big( V(x)  \otimes \varphi(x) \varphi(x)^\top \big) \big] d\mu(x)
  & = & \sum_{i,j=1}^p \int_\X \varphi(x)^\top G_{ij} \varphi(x) \varphi(x)^\top T_{ij} \varphi(x) d\mu(x) 
  \\
  & = & \tr \Big( \Sigma \sum_{i,j=1}^p  G_{ij} \otimes T_{ij} \Big).
  \EEAS

  We thus get a partially relaxed formulation, which cannot be solved yet by a semi-definite program (SDP), because of the set $\mathcal{K}_{\varphi\otimes \varphi}$:
   \BEA
\label{eq:partialSOS}
\min_{\Sigma \in \rb^{m^2 \times m^2}} \max_{T \in \rb^{mp \times mp}}
 \tr \Big( \Sigma \sum_{i,j=1}^p  \! G_{ij} \otimes T_{ij} \Big) &  \mbox{ such that} &  \ \Sigma  \in \mathcal{K}_{\varphi\otimes \varphi}\\[-.2cm]
\notag & &\  T \succcurlyeq 0, \ T \in  \V_\varphi^\perp \otimes \mathbb{S}_p + \mathbb{S}_m\otimes \V_\psi ,  \ \widetilde{\tr} [T] - \idm \in \V_\varphi^\perp.
\EEA
\paragraph{Replacing $ {\mathcal{K}_{\varphi \otimes \varphi} }$ by $\widehat{\mathcal{K}}_{\varphi \otimes \varphi}$.}
We obtain our final formulation, which can be solved as an SDP, where $ \mathcal{K}_{\varphi\otimes \varphi}$
 is replaced by  $\widehat{\mathcal{K}}_{\varphi\otimes \varphi}$:
   \BEA
   \label{eq:fullSOS}
\min_{\Sigma \in \rb^{m^2 \times m^2}} \max_{T \in \rb^{mp \times mp}}
 \tr \Big( \Sigma \sum_{i,j=1}^p  \! G_{ij} \otimes T_{ij} \Big) &  \mbox{ such that} &  \ \Sigma \succcurlyeq 0 , \  \Sigma \in \V_{\varphi \otimes \varphi},  \ \tr(\Sigma) = 1 \\[-.2cm]
\notag & &\  T \succcurlyeq 0, \ T \in  \V_\varphi^\perp \otimes \mathbb{S}_p + \mathbb{S}_m\otimes \V_\psi ,  \ \widetilde{\tr} [T] - \idm \in \V_\varphi^\perp.
\EEA
We thus obtain a convex-concave min-max problem corresponding exactly to \eq{minmaxfull} in \mysec{duminmax}.

\paragraph{Alternative formulations.}
We can then choose to transform it into a minimization problem akin to \eq{min} by adding a Lagrange multiplier $C$ for the constraint $T \succcurlyeq 0$, and $\Lambda \in \V_\varphi$ for $\widetilde{\tr} [T] - \idm \in \V_\varphi^\perp$, leading to:
\BEA
\label{eq:minSOSpb}
\min_{\Sigma \in \rb^{m^2 \times m^2} , \ C \in \rb^{mp \times mp}, \ \Lambda \in \rb^{m \times m} } \tr [ \Lambda ] 
& \mbox{ such that } &   \Sigma \Hsquare G  +  C - \Lambda \otimes I \in {\V_\varphi} \otimes \V_\psi^\perp \\
\notag & &  \Sigma \in \V_{\varphi \otimes \varphi}, \ \Sigma \succcurlyeq 0, \ \tr(\Sigma) = 1 \\
\notag & & \Lambda \in \mathcal{V}_\varphi, \ C \succcurlyeq 0,
\EEA
where $\Sigma \Hsquare G \in \rb^{mp \times mp}$ is defined by block as:
$[\Sigma \Hsquare G ]_{ij} = \Sigma_{ij} G_{ij} \in \rb^{m \times m}$. This is the formulation used for solving the optimization problem empirically in \mysec{experiments}.

 Alternatively, we obtain a maximization problem akin to \eq{max} from \eq{fullSOS}, by adding a Lagrange multiplier $A \succcurlyeq 0$ for the constraint $\Sigma \succcurlyeq 0$, and $c \in \rb$ for the constraint $\tr \Sigma = 1$:
 \BEA
\label{eq:maxSOSpb}
\max_{T \in \rb^{mp \times mp},\ A \in \rb^{m^2 \times p^2}, \ c \in \rb } c
& \mbox{ such that } & T \circ G  - c \idm - A \in  \V_{\varphi \otimes \varphi}^\perp \\
\notag & & T \succcurlyeq 0, \ T \in  \V_\varphi^\perp \otimes \mathbb{S}_p+ \mathbb{S}_m \otimes \V_\psi ,  \ \widetilde{\tr} [T] - \idm \in \V_\varphi^\top \\
\notag & & A \succcurlyeq 0.
\EEA
This formulation will be used in the convergence proof in \mysec{guarantees}.

\paragraph{Summary.}
Overall, the formulation is obtained through 3 approximations: 
\BIT
\item Replacing $ {\mathcal{K}_\psi}$ by $\widehat{\mathcal{K}}_\psi$. If this approximation is exact, then the maximization in $y$ is exact, and we obtain lower bounds. This is, for example the case for $\Y = \{1,\dots,p\}$, and also for degree $2$ polynomials. Otherwise, the equal values of problems in \eq{fullSOS}, \eq{minSOSpb}, and \eq{maxSOSpb} may be above or below the optimal value.

\item Parameterizing all functions $V: \X \to \mathcal{V}_\psi \cap \mathbb{S}_p^+$ by a matrix sum-of-squares~\cite{scherer2006matrix,fang2021sum,muzellec2021learning}. This can only be exact if the function $V$ is a polynomial, with the extra approximation due to the potential non-tightness of matrix SOS.

\item Replacing $ {\mathcal{K}_{\varphi \otimes \varphi} }$ by $\widehat{\mathcal{K}}_{\varphi \otimes \varphi}$. This is a typical SOS relaxation problem.
\EIT

These approximations are discussed in \mysec{guarantees}.

\subsection{Kernelization}
We assume that we have $m$ points $x_1,\dots,x_m$ such that the corresponding kernel matrix is invertible, complemented by $m'-m$ points $x_{m+1},\dots,x_{m'}$ such that $\V_\varphi$ is spanned by $\varphi(x_1) \varphi(x_1)^\top, \dots, 
\varphi(x_{m'}) \varphi(x_{m'})^\top$, and finally $m''-m'$ points such that
$\V_{\varphi \otimes \varphi}$ is spanned by $\varphi(x_1)^{\otimes 4}, \dots, 
 \varphi(x_{m''})^{\otimes 4}$. We denote by $K' \in \rb^{m' \times m'}$ the kernel matrix of the first $m'$ points. The matrix $K'$ is not invertible, but $K' \circ K'$ (with $\circ$ the element-wise product) is, because 
 $\varphi(x_1) \varphi(x_1)^\top, \dots, 
\varphi(x_{m'}) \varphi(x_{m'})^\top$ is a basis of $\V$. We denote by $K'' \in \rb^{m' \times m''}$ the kernel matrix between the $m'$ first points and all $m''$ points. 
 
 We can then express for all $i \in \{1,\dots,m''\}$,
  $\varphi(x_i) \varphi(x_i)^\top = \sum_{j=1}^{m'} N_{ji} \varphi(x_j) \varphi(x_j)^\top$, with the matrix $N
  \in \rb^{ m' 
  \times m''}$ equal to
  $N =( K' \circ K' ) ^{-1} K''$.
  
We can write \eq{minSOSpb} with $\Sigma = \sum_{i=1}^{m''} \alpha_i  \varphi(x_{i})^{\otimes 4} $, $C = \sum_{i=1}^{m'} D_i \otimes\varphi(x_i)\varphi(x_i)^\top$   , and $\Lambda 
= \sum_{i=1}^{m'} \lambda_i\varphi(x_i)\varphi(x_i)^\top $, and get the optimization problem (which is an SDP which we use in our experiments \mysec{experiments}): 
\BEAS
 & & \min_{\alpha \in \rb^{m''} , \ \lambda \in \rb^{m'}, \ D_1,\dots,D_{m'} \in \rb^{p' \times p'} } \sum_{i=1}^{m'} \lambda_i \\
 &  \mbox{ such that } & 
\forall i \in \{1,\dots,m'\}, j \in \{1,\dots,p'\},  \ 
\psi(y_j)^\top D_i \psi(y_j) - \lambda_i + [ N \Diag(\alpha) G ]_{ij} = 0 \\
& &  \sum_{i=1}^{m'}   D_i  \otimes \varphi(x_i)\varphi(x_i)^\top\succcurlyeq 0, \ 
 \sum_{i=1}^{m''} \alpha_i = 1, \ 
\sum_{i=1}^{m''} \alpha_i  \varphi(x_{i}) ^{ \otimes 4} \succcurlyeq 0.
\EEAS
From the vector $\alpha$, we can obtain a potential minimizer using algorithms from \mysec{practical}, with the possibility of full kernelization as in \mysec{kernelization}, where $G \in \rb^{m'' \times p'}$ is the matrix of evaluations $g(x_i,y_j)$.

\subsection{A posteriori guarantees}
Since we have used relaxations of both maximization and minimization problems, we do not obtain, in general, an upper or lower bound, except in some special cases that we now describe.

If the feature map $\psi$ is such that $\widehat{\K}_\psi  = \K_\psi$, then from the matrix $T \in \rb^{mp \times mp}$, we get $V: \X \to \K_\psi$, and thus a feasible dual point for 
\eq{B}. Therefore the value of the SOS formulation is always below the true one.
If $\Sigma$ is represented by a singleton $\varphi(x_\ast) \varphi(x_\ast)^\top$, then if $V(x_\ast)$ is such that 
$\max_{y \in \Y} L(x_\ast,y) = \tr[ M ( \varphi(x_\ast)\varphi(x_\ast)^\top \otimes V(x_\ast) )]$, we have a tight solution. This happens in our simulations.

If  $\widehat{\K}_\psi  \supsetneq \K_\psi$, then, when $\Sigma$ is represented by a singleton $\varphi(x_\ast) \varphi(x_\ast)^\top$, if $V(x_\ast)$ is such that 
$\max_{y \in \Y} L(x_\ast,y) = \tr[ M ( \varphi(x_\ast)\varphi(x_\ast)^\top \otimes V(x_\ast) )]$, we only know that we have an upper-bound on the true value.

\subsection{A priori guarantees}

\label{sec:guarantees}

In this section, we focus primarily on the situation where $\widehat{\K}_\psi = \K_\psi$, so we do not have to use hierarchies on $\Y$. If this is not the case, we can use another relaxation $\widetilde{K}_\psi$ that would lead to an extra approximation factor that goes to zero as the degree of the hierarchy on $y$ goes to infinity. Still, the precise details are out of the scope of this paper.

The main new result shows that for the polynomial examples in \mysec{examples}, the partially relaxed problem in \eq{partialSOS} can be solved through hierarchies with arbitrary precisions. We make the following assumptions:
\BIT
\item[\textbf{(A1)}] $\widehat{\K}_\psi = \K_\psi$, so that our relaxation is a lower-bound.
\item[\textbf{(A2)}] Given a one-dimensional Lipschitz-continuous function $g: \X \to \rb$, it can be approximated by a quadratic form in 
$  { \varphi \choose \varphi_1^+  } $, where $\varphi_1^+$ includes additional monomials, and we denote by $\varepsilon^{{\rm app}}(\varphi,\varphi_1^+)$ the approximation constant so that for all $g$, there exists a quadratic form in $\tilde{\varphi} $ defined by the matrix $\tilde{H}$, such that
\[
\forall x \in \X, \ | g(x) - \tilde{\varphi}(x)^\top \tilde{H}  \tilde{\varphi}(x) | \leqslant {\rm Lip}(g) \cdot \varepsilon^{{\rm app}}(\varphi,\varphi^+_1).
\]
It is known that, given $\varphi$, we can make $\varepsilon^{{\rm app}}(\varphi,\varphi^+_1)$ as small as desired by increasing the degree of the polynomials, with well-studied convergence rates, through ``Jackson's inequalities''~\cite{ganzburg1981multidimensional}.

\item[\textbf{(A3)}] We solve the equivalent optimization problems in \eq{fullSOS}, \eq{minSOSpb}, or \eq{maxSOSpb} with $\varphi$ replaced by $\tilde{\varphi} = { \varphi \choose \varphi^+  } $, where $\varphi^+ = { \varphi_1^+ \choose \varphi_2^+}$ includes additional mononials on top of $\varphi_1^+$. We denote by $\varepsilon^{\rm SOS}(\varphi,\varphi^+_1,\varphi_2^+)$ the SOS approximability ratio defined in \mysec{sosguarantees} as, for any $H \in \rb^{(m+m_1)s \times (m+m_1)s}$:
\[
\!\! \forall x \in \X, \ H[ \varphi(x) \varphi(x)^\top] \succcurlyeq  \varepsilon^{\rm SOS}(\varphi,\varphi^+_1,\varphi_2^+)  \| H \|_{\rm F}\idm \ \ \Rightarrow \ \ \exists \tilde{A} \succcurlyeq 0, \ \forall x \in \X, \  H[ \varphi(x) \varphi(x)^\top] = 
 \tilde{A}[ \tilde{\varphi}(x) \tilde{\varphi}(x)^\top].
\]
We select the threshold to have a similar result for scalar-valued quadratic forms in $\varphi(x)\varphi(x)^\top$, as described at the end of \mysec{matrixSOS}.
We know from \mysec{matrixSOS} that, given $\varphi$ and $\varphi^+_1$, we can make $\varepsilon^{\rm SOS}(\varphi,\varphi^+_1,\varphi_2^+)$ as small as desired by increasing the degree of the polynomials.  \EIT 

Note that we only need to divide $\varphi^+$ in ${\varphi^+_1 \choose \varphi^+_2}$ for the proof, as the algorithm is oblivious to this distinction. Our main result follows.
\begin{theorem}
Let $G$ be defined by a polynomial as in \eq{A} and $\varepsilon > 0$. Assume \textbf{(A1)},   \textbf{(A2)}, and  \textbf{(A3)}. Then there exist feature maps $\varphi^+_1$ and $\varphi^+_2$ such that the optimal value of the SOS primal-dual relaxation is within $\varepsilon$ of the optimal value.
\end{theorem}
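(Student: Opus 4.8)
The plan is to chain together the three relaxations described in the Summary, tracking how each one moves the optimal value and showing that all three perturbations can be made smaller than $\varepsilon$ by enriching the feature map $\varphi$. Because of (A1), the first relaxation (replacing $\K_\psi$ by $\widehat{\K}_\psi$) is exact, so I would start from the partially relaxed problem \eq{partialSOS}, whose value I call $v_{\rm part}$, and which satisfies $v_{\rm part} \geqslant c_\ast$ (the true optimal value of \eq{SP}) because it is still a genuine relaxation of \eq{B}. The two remaining gaps are: (i) the gap between $v_{\rm part}$ and $c_\ast$, coming from restricting $V(\cdot)$ to matrix-SOS polynomials (the ``Parameterizing $V$'' step), and (ii) the gap between the final SDP value of \eq{fullSOS} and $v_{\rm part}$, coming from replacing $\K_{\varphi\otimes\varphi}$ by $\widehat{\K}_{\varphi\otimes\varphi}$ (a pure SOS relaxation, controlled by $\varepsilon^{\rm SOS}$). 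I would bound each gap separately and then pick the degrees large enough that their sum is $<\varepsilon$.

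For gap (i): fix a (near-)optimal $x_\ast$ for the true problem, so $\max_{y} g(x_\ast, y) \leqslant c_\ast + \varepsilon/4$. The function $x \mapsto \max_{y\in\Y} g(x,y)$ is Lipschitz (as a maximum of a family of polynomials with uniformly bounded gradients on the compact $\X\times\Y$), hence by (A2) there is a quadratic form in $\tilde\varphi_1 = {\varphi \choose \varphi_1^+}$ approximating it uniformly within ${\rm Lip}\cdot\varepsilon^{\rm app}(\varphi,\varphi_1^+)$; equivalently, using the maximization formulation \eq{maxSOSpb}/\eq{max}, I want a matrix-SOS $V$ and a constant $c$ with $\int_\Y g(x,y)\,d\nu(y|x)\geqslant c$ for all $x$, i.e. a matrix-valued polynomial $x\mapsto V(x)$ with $V(x)\in\widehat\K_\psi$ and $\varphi(x)^\top G\cdot V(x)\,\varphi(x)\geqslant c$ pointwise. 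The cleanest route is: take $\nu(\cdot|x)$ to be (approximately) an optimal maximizing distribution for the inner problem — since $\widehat\K_\psi=\K_\psi$, one may realize $V(x)=\int \psi\psi^\top d\nu(y|x)$ exactly — and then approximate this $\Y$-valued selection by a polynomial matrix-valued map using (A2) coordinatewise, after which the matrix-SOS tightness result at the end of \mysec{matrixSOS} (with constant $\varepsilon^{\rm SOS}$, taken $O(1/s^2)$) upgrades the strictly-feasible polynomial $V$ to an actual matrix-SOS of degree $2s$. Adding the small slack lost in these two steps, $v_{\rm part}\leqslant c_\ast + \varepsilon/2$ once the degree of $\varphi_1^+$ is large enough, while $v_{\rm part}\geqslant c_\ast$ always.

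For gap (ii): this is exactly the scalar SOS relaxation quality for the objective of \eq{partialSOS} viewed as a polynomial in $x$ (the matrix $T$ being fixed at its partial optimum), so by the argument in \mysec{sosguarantees} — strict positivity of a degree-$2r$ polynomial implies it is an SOS of degree $2s$ once $\varepsilon^{\rm SOS}(\varphi,\varphi_1^+,\varphi_2^+)$ is small — the value of \eq{fullSOS} with $\varphi$ replaced by $\tilde\varphi = {\varphi \choose \varphi^+}$, $\varphi^+ = {\varphi_1^+ \choose \varphi_2^+}$, differs from $v_{\rm part}$ by at most a quantity proportional to $\varepsilon^{\rm SOS}\cdot\|G\|_{\rm F}\cdot\|T^\ast\|$, which we drive below $\varepsilon/2$ by enlarging $\varphi_2^+$. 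Here one must be slightly careful that the matrix $T^\ast$ achieving $v_{\rm part}$ has norm bounded independently of the degree — this follows from $\widetilde{\tr}[T]=\idm$ and $T\succcurlyeq 0$, which forces $\tr(T)\leqslant m$ — and that enriching $\varphi$ does not shrink the feasible set for $T$, which holds because the old $\varphi$ sits inside the new $\tilde\varphi$ (the ``embedding'' remark after \eq{fullSOS}). Combining, the final SDP value lies in $[c_\ast - \varepsilon/2,\ c_\ast + \varepsilon]$ — in fact, since (A1) gives a lower bound, in $[c_\ast,\ c_\ast+\varepsilon]$ — for $\varphi_1^+,\varphi_2^+$ of sufficiently high degree.

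The main obstacle, and the step I expect to require the most care, is gap (i): making rigorous that an optimal inner selection $x\mapsto \nu(\cdot|x)$ (equivalently $x\mapsto V(x)\in\K_\psi$) can be approximated by a matrix-valued \emph{polynomial} with controlled Lipschitz constant \emph{while keeping the values inside $\widehat\K_\psi=\K_\psi$} and \emph{strictly} positive semidefinite (so that matrix-SOS tightness applies). The selection need not be continuous in $x$, so one cannot directly invoke Jackson-type estimates; the fix is to first smooth — e.g. convolve the $x$-marginal construction, or replace the pointwise-optimal $\nu(\cdot|x)$ by a fixed near-optimal measure perturbed slightly toward the barycenter of $\K_\psi$ — obtaining a Lipschitz, uniformly-strictly-feasible $V$ at the cost of $\varepsilon/8$ in value, and only then apply (A2) and the matrix-SOS result. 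Everything else is bookkeeping with the duality already established in \mysec{duminmax} and the SOS machinery of \mysec{sos}.
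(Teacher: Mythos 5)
Your overall route is essentially the paper's: build an approximately optimal dual certificate $x \mapsto V(x)$ with values in $\widehat{\K}_\psi=\K_\psi$, approximate it by a quadratic form in ${\varphi \choose \varphi_1^+}$ via \textbf{(A2)}, upgrade to a matrix-SOS via \textbf{(A3)}, apply \textbf{(A3)} once more at the scalar level to produce a feasible point of the maximization formulation \eq{maxSOSpb}, and balance the error terms by choosing the degrees in the right order. However, your bookkeeping of directions is reversed. Under \textbf{(A1)} the substitution of $\K_\psi$ by $\widehat{\K}_\psi$ is exact, and the two remaining steps (restricting the inner maximization to matrix-SOS parameterizations, then enlarging the outer minimization from $\K_{\varphi\otimes\varphi}$ to $\widehat{\K}_{\varphi\otimes\varphi}$) can only \emph{decrease} the value; hence $v_{\rm part}\leqslant c_\ast$, the final SDP value lies in $[c_\ast-\varepsilon,\,c_\ast]$ rather than $[c_\ast,\,c_\ast+\varepsilon]$, and the feasible $T$ you construct delivers the \emph{lower} bound (value $\geqslant c_\ast - \mathrm{slack}$), not the upper bound ``$v_{\rm part}\leqslant c_\ast+\varepsilon/2$'' you assert. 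Your construction yields exactly the inequality that is actually needed, but several of your stated inequalities, including ``$v_{\rm part}\geqslant c_\ast$ always,'' are wrong as written. (A smaller issue: your gap (ii) argument requires the partial optimum $T^\ast$ to exist and have controlled norm; the paper avoids this by applying \textbf{(A3)} directly to its constructed certificate, whose norm is fixed before $\varphi_2^+$ is chosen.)

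The genuine gap is the smoothing step that you yourself flag as the hard part. The paper resolves it with a concrete device (its Lemma 1): regularize the inner maximization by the von Neumann entropy and set $V(x)=W_\eta\big(G[\varphi(x)\varphi(x)^\top]\big)$; strong convexity of the entropy makes $W_\eta$ a $(1/\eta)$-Lipschitz map of its matrix argument, so $V$ is Lipschitz in $x$ with constant proportional to $\|G\|_{\rm F}/\eta$, while losing at most $\eta\log p$ in value \emph{at every} $x\in\X$. This quantitative Lipschitz-versus-accuracy trade-off is exactly what \textbf{(A2)} consumes and what your sketch lacks. Of your two proposed fixes, the ``fixed near-optimal measure perturbed toward the barycenter'' cannot work: a selection independent of $x$ (or near-optimal only at $x_\ast$) violates the dual requirement $\int_\Y g(x,y)\,d\nu(y|x)\geqslant c$ away from $x_\ast$ — the certificate must be near-maximizing at \emph{all} $x$, not only at the minimizer, which is precisely the point the paper emphasizes. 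Your mollification-in-$x$ variant is closer in spirit and could plausibly be carried out on the torus, but it still needs a measurable near-optimal selection, a smoothing compatible with the geometry of each simple set (sphere, ball, Boolean cube), and an explicit accuracy loss scaling like the inverse of the resulting Lipschitz constant, so that the three terms (smoothing, $\varepsilon^{\rm app}$, $\varepsilon^{\rm SOS}$) can each be driven below $\varepsilon/3$ in the order $\eta$, then $\varphi_1^+$, then $\varphi_2^+$. Without such a quantitative smoothing lemma, the chain of estimates does not close.
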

\begin{proof}
This requires obtaining SOS polynomial approximations to the matrix-valued function $V$ obtained in \eq{C}. To obtain a finite convergence, we would need to represent one of the many optimal~$V$'s exactly. Here we will consider a specific approximation based on Von Neumann entropy regularization and start with a smoothing lemma.
\begin{lemma}
\label{lemma:smoothing}
Let $B \in \mathbb{S}_p$ and $\eta>0$. Let $W_\eta(B)$ be the unique maximizer of $\tr[ BW ] - \eta \tr [ W \log W  ]$ such that $W \succcurlyeq 0$, $ \tr(W)=1$, and $W \in \mathcal{V}_\psi \subset \rb^p $. Then $W_\eta$ is a $(1/\eta)$-Lipschitz-continuous function of $B$, and
\[
0 \leqslant  \ \max_{W \succcurlyeq 0, \ \tr(W) = 1, \ W \in \mathcal{V}_\psi}
\tr[ BW ]  - \tr[ BW_\eta(B)]  \ \leqslant \eta \log p.
\]
\end{lemma}
\begin{proof}
Since the function $W\mapsto \tr [ W \log W]$ is 1-strongly convex with respect to the nuclear norm on the set $\{ W \succcurlyeq 0, \tr(W) = 1\}$~\cite{yu2013strong}, $W_\eta$ is such that $\| W_\eta(B) - W_\eta(B') \|_\ast \leqslant \frac{1}{\eta} \| B - B'\|_{\rm op}$~\cite{lemarechal1997practical}, where $\| \cdot \|_\ast$ denotes the nuclear norm. The bound is obtained by looking at eigenvalues of $V$ and using classical bound on entropies~\cite{cover1999elements}.
%Note that by convex duality, we have: 
%\BEAS
%\max_{W \succcurlyeq 0, \ \tr(W) = 1, \ W \in \mathcal{V}_\psi}
%\tr[ BW ] - \eta \tr [ W \log W ]
%& = & \min_{Y \in \mathcal{V}_\psi^\perp} 
%\max_{W\succcurlyeq 0, \ \tr(W) = 1}
%\tr[ (B + Y) W ] - \eta \tr [ W \log W  ] \\
%& = & \min_{Y \in \mathcal{V}_\psi^\perp} \eta \log \tr \exp\Big( \frac{B+Y}{\eta} \Big),
%\EEAS
%with the optimal $W$ obtained as $W = \big( {\tr \exp\big( \frac{B+Y}{\eta} \big)} \big)^{-1} \exp\big( \frac{B+Y}{\eta} \big)$.  
\end{proof}
We can now build a feasible point for \eq{maxSOSpb} which will lead to the desired bound. Following the discussion at the end of \mysec{duality}, there are many optimal candidates for $x \mapsto V(x)$. In this proof, we propose a dual candidate $V$ based on maximizing approximately $g(x,y)$ for \emph{all} $x \in \X$, and not only at the minimizer $x_\ast$. While it allows to show asymptotic convergence, it is not sufficient to show finite convergence.

We consider $\eta>0$, and the function $V: x \mapsto W_\eta \big( G[ \varphi(x) \varphi(x)^\top ] )$ obtained from Lemma~\ref{lemma:smoothing}. By construction, $\forall x \in \X, V(x) \in \widehat{\K}_\psi$, $V$ is Lipschitz-continuous with constant proportional to $\| G\|_{\rm F} /\eta$ (with constants that depends on $\varphi$), 
since
\[
\| V(x') - V(x)\|_\ast \leqslant \frac{1}{\eta}
\| G'[ \varphi(x) \varphi(x')^\top ] - G[ \varphi(x) \varphi(x)^\top ] \|_{\rm op}
\leqslant \frac{1}{\eta} \| G\|_{\rm F} {\rm Lip}(\varphi).
\]
Moreover, from Lemma~\ref{lemma:smoothing}, we have
\BEQ
\label{eq:appratio}
 0 \leqslant \max_{y \in \Y} g(x,y) - \tr \big( V(x) G[ \varphi(x) \varphi(x)^\top ] \big) \leqslant \eta \log p.
 \EEQ

We can then use  Assumption \textbf{(A2)} and approximate $V$ by a quadratic form in $ {\varphi \choose \varphi^+_1}$. We thus find a  matrix $U \in \rb^{(m+m_1)p \times (m+m_1)p}$ such that all affine constraints on values of $V$ are still satisfied, that is, 
$ U \in  \V_\varphi^\perp \otimes \mathbb{S}_p+ \mathbb{S}_m \otimes \V_\psi  $ and $\widetilde{\tr} [U] - \idm \in \V_\varphi^\top $, and
\[
\forall x \in \X, \ \| V(x) - U[ \varphi(x) \varphi(x)^\top ] \|_{\rm op} \leqslant  C \cdot
 \frac{1}{\eta} \| G\|_{\rm F}  \cdot  \varepsilon^{({\rm app})}(\varphi,{\varphi}^+_1)
\]
for some constant $C$ that is independent of $G$ and $\varphi^+_1$. Thus for all $x \in \X$, $U[ \varphi(x) \varphi(x)^\top ]\succcurlyeq 
- C \cdot
 \frac{1}{\eta} \| G\|_{\rm F}  \cdot  \varepsilon^{{\rm app}}(\varphi,{\varphi}^+_1) \idm $, which implies from Assumption \textbf{(A3)} that 
\[
U[ \varphi(x) \varphi(x)^\top] + \big[ \varepsilon^{{\rm SOS}}(\varphi,\varphi^+_1,\varphi_2^+)  \| U\|_{\rm F}
+  C \cdot
 \frac{1}{\eta} \| G\|_{\rm F}  \cdot  \varepsilon^{{\rm app}}(\varphi,{\varphi}^+_1)\big] \idm
\]
is a sum-of-squares and satisfies all other affine constraints. We denote by $T$ the corresponding matrix, which is feasible for \eq{maxSOSpb}. 
Moreover, because of \eq{appratio}, we have, for all $x \in \X$,
\[
\tr \big( T[ \varphi(x) \varphi(x)^\top ]G[ \varphi(x) \varphi(x)^\top ] \big) 
\geqslant  \min_{x' \in \X} \max_{y \in \Y} g(x',y) - \eta \log p - \big[ \varepsilon^{{\rm SOS}}(\varphi,\varphi^+_1,\varphi_2^+)  \| U\|_{\rm F}
+   
 \frac{C}{\eta} \| G\|_{\rm F}  \cdot  \varepsilon^{{\rm app}}(\varphi,{\varphi}^+_1)\big],
\]
and thus, applying Assumption~\textbf{(A3)} again, $T \circ G - \min_{x' \in \X} \max_{y \in \Y} g(x',y) -  \eta \log p 
+
\big[  \varepsilon^{{\rm SOS}}(\varphi,\varphi^+_1,\varphi_2^+) ( \| U\|_{\rm F} + \| T \circ G\|_{\rm F}) 
+  
 \frac{C}{\eta} \| G\|_{\rm F}  \cdot  \varepsilon^{{\rm app}}(\varphi,{\varphi}^+_1)\big]$ is a sum of squares, and thus we obtain an approximation of 
$\min_{x' \in \X} \max_{y \in \Y} g(x',y) $ up to 
$\eta \log p +  \varepsilon^{{\rm SOS}}(\varphi,\varphi^+_1,\varphi_2^+)   ( \| U\|_{\rm F} + \| T \circ G\|_{\rm F}) 
+   
 \frac{C}{\eta} \| G\|_{\rm F}  \cdot  \varepsilon^{{\rm app}}(\varphi,{\varphi}^+_1)$.
Now, given $\varepsilon >0$, we take $\eta = \frac{1}{3 \log p}$, then select $\varphi^+_1$ such that 
$ 
 \frac{C}{\eta} \| G\|_{\rm F}  \cdot  \varepsilon^{{\rm app}}(\varphi,{\varphi}^+_1)$ is smaller than $\varepsilon/3$, and finally select $\varphi_2^+$ such that $ \varepsilon^{{\rm SOS}}(\varphi,\varphi^+_1,\varphi_2^+)  ( \| U\|_{\rm F} + \| T \circ G\|_{\rm F}) 
$ is less than $\varepsilon/3$. This leads to desired approximation within $\varepsilon$.
\end{proof}

We can make the following observations:
\BIT
\item The hierarchy often empirically converges in finitely many iterations, but we cannot find provable sufficient conditions. It would be interesting to see if, assuming that the polynomial is convex-concave, we could use tools from \cite{de2011lasserre} to prove such convergence. 
\item To obtain a convergence rate, we would need to be able to characterize the dependence of $\varepsilon^{{\rm SOS}}(\varphi,\varphi^+_1,\varphi_2^+)$ on $\varphi_1^+$, which we leave for future work.
\EIT
 
\subsection{Special case $\Y = \{1,\dots,p\}$}
This corresponds to having $\mathcal{K}_\psi = \widehat{\K}_\psi$ the set of PSD diagonal matrices with unit trace. We can then simplify notations and solve
\[
\ds
\min_{x \in \X} \max_{j \in \{1,\dots,p\}} \varphi(x)^\top G_j \varphi(x),
\]
with $G_1,\dots,G_p \in \mathbb{S}_m$.
We then have $V$ diagonal, with diagonal elements $v_j(x) = \varphi(x)^\top T_j \varphi(x)$, with $T_j \succcurlyeq 0$, and $  \sum_{j=1}^ p T_j - \idm \in \V_\varphi^\perp$.
We thus obtain the min/max formulation corresponding to \eq{fullSOS}:
 \BEA
\label{eq:fullfull}\min_{\Sigma \in \rb^{m^2 \times m^2}} \max_{T_1,\dots,T_p \in  \in \rb^{m \times m}}
\tr \Big[ \Sigma \cdot \sum_{j=1}^p G_j \otimes T_j \Big] &  \mbox{ such that} &  \ \Sigma \succcurlyeq 0 , \  \Sigma \in \V_{\varphi \otimes \varphi},  \ \tr(\Sigma) = 1 \\[-.2cm]
\notag & &\  T_1,\dots,T_p \succcurlyeq 0, \  \sum_{j=1}^ p T_j - \idm \in \V_\varphi^\perp.
\EEA
We also get the minimization formulation, which is the one used in experiments, corresponding to \eq{minSOSpb}:
\BEA
\label{eq:minmin} \min_{\Sigma \in \rb^{m^2 \times m^2} , \  \Lambda \in \rb^{m \times m} } \tr [ \Lambda ] 
& \mbox{ such that } &  \forall j \in \{1,\dots,p\}, \ \Sigma [ G_j ] \preccurlyeq \Lambda \\
\notag & &  \Sigma \in \V_{\varphi \otimes \varphi}, \ \Sigma \succcurlyeq 0, \ \tr(\Sigma) = 1 \\
\notag & & \Lambda \in \mathcal{V}_\varphi.
\EEA
We also get a maximization formulation, corresponding to \eq{maxSOSpb}, and leading to a nice interpretation below:
 \BEA
 \label{eq:maxmax}
\max_{T_1,\dots,T_p \in    \rb^{m\times m}}
c  &  \mbox{ such that} &
\sum_{j=1}^p G_j \otimes T_j  - c \idm - A \in  \V_{\varphi \otimes \varphi}^\perp \\[-.2cm]
\notag & &\  T_1,\dots,T_p \succcurlyeq 0, \  \sum_{j=1}^ pT_j - \idm \in \V_\varphi^\perp.
\EEA

\paragraph{Kernelization.} Empirically, we solve
\BEAS
\min_{\alpha \in \rb^{m''} , \ \lambda \in \rb^{m'} } \sum_{i=1}^{m'} \lambda_i
& \mbox{ such that } &  \forall j \in \{1,\dots,p\}, \ \sum_{i=1}^{m''} \alpha_i g_j(x_i) \varphi(x_i) \varphi(x_i)^\top \preccurlyeq \sum_{i=1}^{m'} \lambda_i \varphi(x_i) \varphi(x_i)^\top  \\[-.2cm]
& &   \sum_{i=1}^{m''} \alpha_i = 1, \ \sum_{i=1}^{m''} \alpha_i  \varphi(x_{i})\varphi(x_{i})^\top \otimes 
\varphi(x_{i})\varphi(x_{i})^\top \succcurlyeq 0.
\EEAS
We obtain the matrices $T_1,\dots,T_p$ as Lagrange multipliers for the PSD constraints.

\paragraph{Relationship with Putinar's Positivstellensatz.} An interesting parallel with Putinar's Positivstellensatz~\cite{putinar1993positive} can be made. We consider $p$ multi-variate polynomials $g_1,\dots,g_p$, with $\X \subset \rb^d$   one of the simple sets described in \mysec{sos}. Because of the approximation result in \mysec{guarantees}, we know that if $\min_{x \in \X} \max_{j=1,\dots,p} g_j(x)$ is strictly positive, there is a level of the hierarchy of polynomials so that our relaxation also has strictly positive values, and, in fact, the converse is also true.
Thus, using the maximization formulation from \eq{maxmax}, $\min_{x \in \X} \max_{j=1,\dots,p} g_j(x) >0$,
if and only if there exists $c>0$ and sum-of-square (that is, PSD quadratic forms in $\varphi$) polynomials $q_0$ (represented by $A$), and $q_1,\dots,q_p$, represented by $T_1,\dots,T_p$, such that
\[
\forall x \in \X, \ 
c = \sum_{j=1}^p g_j(x) q_j(x) - q_0(x) ,
\]
and such that $q_1(x) + \cdots + q_p(x) = 1$ for all $x \in \X$.

Without loss of the generality, we can take $c=1$, and we have shown that 
\[
\min_{x \in \X} \max_{j=1,\dots,p} g_j(x) >0
\]
 if and only if there exist SOS polynomials (based on the feature vector $\varphi$) $q_0,\dots,q_p$ such that 
\[
\forall x \in \X, \ 
- 1 = \sum_{j=1}^p (-g_j(x)) q_j(x) + q_0(x)  
\]
and $q_1(x) + \cdots + q_p(x) $ is constant on $\X$.

Without the last constraint, this turns out  to be exactly the Putinar certificate for the positivity of $-1$ on the set 
$\mathcal{A} = \{x \in \rb^d, \ \forall j \in \{1,\dots,p\}, \ -g_j(x) \geqslant 0\}$, and thus a certificate for the emptiness of that set. Given that 
\[
\min_{x \in \X} \max_{j=1,\dots,p} g_j(x) \leqslant 0 
\ \Leftrightarrow\  \exists x \in \X, \ \forall j \in \{1,\dots,p\},\  g_j(x)\leqslant 0 \ \Leftrightarrow \
\X \cap \mathcal{A} \neq \varnothing,
\]
we obtained a feasibility certificate similar to the one obtained for Putinar. Note that the original Putinar certificate does require an extra assumption, e.g., that one of the sets 
$\{x \in \rb^d,\  - g_j(x) \geqslant 0\}$ is bounded.

Note moreover that with our assumptions from \mysec{examples}, SOS-polynomials that are PSD quadratic forms in $\varphi$ have a slightly different meaning; that is, for example, for the unit Euclidean ball, they correspond to $(1-\|x\|_2^2) u(x) + v(x)$, where $u$ and $v$ are regular sums-of-squares. This leads to the following proposition (where we have replaced $g_j$ by $-g_j$ to match classical certificates, and we have dropped the constraint of summing to a constant, which is not necessary).

\begin{proposition}
Let $g_1,\dots,g_p$ be $p$ multivariate polynomials on $\rb^d$. Then, the set
\[
\{x \in \rb^d, \ \|x\|_2^2 \leqslant 1, \ \forall j \in \{1,\dots,p\}, \ g_j(x) \geqslant 0\}
\]
is empty if and only if there exists polynomials $u_0,v_0, u_1,v_1,\dots,u_p,v_p$ that are sums-of-squares such that
\[
\forall x \in \rb^d, \
- 1 = \sum_{j=1}^p g_j(x) \big[ (1-\|x\|_2^2) u_j(x) + v_j(x) \big]  + (1-\|x\|_2^2) u_0(x) + v_0(x)  .
\]
\end{proposition}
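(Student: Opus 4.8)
The plan is to obtain the Proposition as the specialization, to $\X$ equal to the closed unit Euclidean ball $B=\{x\in\rb^d:\|x\|_2^2\leqslant 1\}$ and $\Y=\{1,\dots,p\}$, of the Putinar-type equivalence for min-max values established just above via the maximization formulation \eq{maxmax}. I would dispatch the ``if'' direction first, since it is elementary and uses no hierarchy: if the stated identity holds and some $x_0$ lies in the set, then $1-\|x_0\|_2^2\geqslant 0$, each $g_j(x_0)\geqslant 0$, and the sums-of-squares $u_j,v_j,u_0,v_0$ are all nonnegative, so the right-hand side is $\geqslant 0$ while the left-hand side equals $-1$, a contradiction; hence the set is empty. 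This direction never uses a constraint like $q_1+\cdots+q_p\equiv 1$, which is exactly why the Proposition can omit it.

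For the ``only if'' direction, suppose the set is empty. Then for every $x\in B$ some $g_j(x)$ is negative, i.e.\ $\max_j(-g_j(x))>0$ on $B$; since $B$ is compact and $x\mapsto\max_j(-g_j(x))$ is continuous, $\delta:=\min_{x\in B}\max_j(-g_j(x))>0$. I would view $B$ as one of the simple sets of \mysec{examples} and take $\Y=\{1,\dots,p\}$, so Assumption \textbf{(A1)} holds ($\widehat\K_\psi=\K_\psi$); hence the relaxation is a genuine lower bound on the min-max value, and the Theorem applies: with $\varepsilon=\delta/2$ there is a hierarchy level (an enlarged ball feature map $\tilde\varphi$) whose relaxation value lies in $[\delta-\varepsilon,\delta]$ and is therefore $\geqslant\delta/2>0$. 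Reading off a feasible point of \eq{maxmax} for the polynomials $-g_1,\dots,-g_p$ and pairing the membership constraints with $\tilde\varphi(x)\tilde\varphi(x)^\top\otimes\tilde\varphi(x)\tilde\varphi(x)^\top$ and with $\tilde\varphi(x)\tilde\varphi(x)^\top$ gives $c>0$, polynomials $q_j(x)=\tilde\varphi(x)^\top T_j\tilde\varphi(x)$ and $q_0(x)=(\tilde\varphi(x)\otimes\tilde\varphi(x))^\top A(\tilde\varphi(x)\otimes\tilde\varphi(x))$ that are sums-of-squares in (a power of) the ball feature map, and the identities $c=\sum_{j=1}^p(-g_j(x))q_j(x)-q_0(x)$, $\sum_{j=1}^p q_j(x)=1$, valid for all $x\in B$.

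It remains to translate these ``SOS in $\tilde\varphi$'' terms into the Schm\"udgen form. By the Euclidean-ball example of \mysec{examples}, every quadratic form in the ball feature map equals, as a polynomial, $(1-\|x\|_2^2)u(x)+v(x)$ for ordinary sums-of-squares $u,v$; applying this to each $q_j$ and to $q_0$ yields SOS polynomials $u_j,v_j,u_0,v_0$. Substituting, rescaling so that $c=1$ (replacing each $q_j,q_0$ by $q_j/c,q_0/c$, still sums-of-squares since $c>0$), discarding the constraint $\sum_j q_j\equiv 1$, and moving the sign, I get
\[
-1=\sum_{j=1}^p g_j(x)\big[(1-\|x\|_2^2)u_j(x)+v_j(x)\big]+(1-\|x\|_2^2)u_0(x)+v_0(x)
\]
for all $x\in B$; since both sides are polynomials and $B$ has nonempty interior, the identity extends to all of $\rb^d$. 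I do not expect a genuine obstacle here, as this is essentially a clean restatement of the machinery; the points requiring care are (i) that it is precisely \textbf{(A1)} --- i.e.\ $\Y$ finite --- that makes the relaxation a lower bound, so that positivity of $\delta$ forces positivity of $c$, and (ii) that the degree-doubled term $q_0$, being a quadratic form in $\tilde\varphi\otimes\tilde\varphi$, is still covered by the ball-SOS representation (it is a sum of squares of polynomials on $B$ of the doubled degree).
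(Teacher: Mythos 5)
Your proposal is correct and follows essentially the same route as the paper: the easy direction by a sign/evaluation argument, and the hard direction by specializing the Theorem (with $\Y=\{1,\dots,p\}$ so that \textbf{(A1)} makes the relaxation a lower bound) to get a strictly positive value of the maximization formulation \eq{maxmax} at some hierarchy level, reading off the SOS multipliers, rescaling $c$ to $1$, converting ball-feature-map SOS into the $(1-\|x\|_2^2)u+v$ form, and extending the polynomial identity from the ball to $\rb^d$. The care points you flag (dropping the normalization $\sum_j q_j\equiv 1$, and handling the degree-doubled term $q_0$) are exactly the ones the paper addresses in the discussion preceding the proposition.
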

It is weaker than Putinar's certificate, which would not need $v_0$, and $u_1,\dots,u_p$. Still, it could be extended to continuous situations where the set $\Y$ (and the corresponding feature map $\psi$) in our min-max formulation leads to tight SOS formulations, for example, for polynomials in $[-1,1]$.

 \section{Experiments}
\label{sec:experiments}

In this section, we provide illustrative experiments where we obtain tight relaxations on small problems. See~{\small \url{https://www.di.ens.fr/~fbach/sos_min_max.zip}} for Matlab code reproducing these experiments.

\begin{figure}
\begin{center}
\includegraphics[width=14cm]{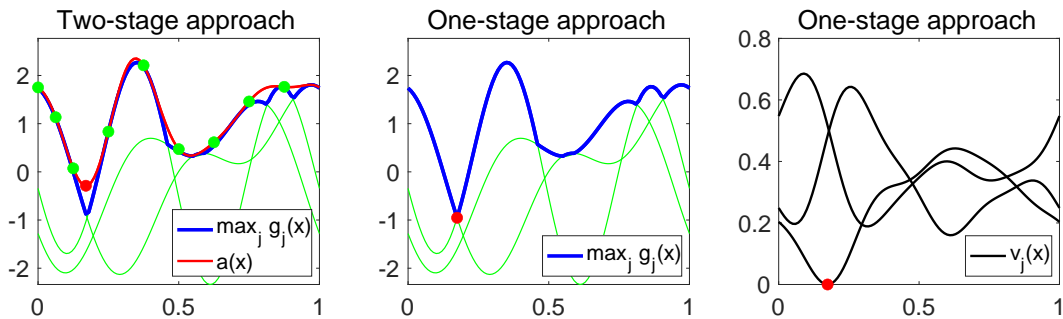}
\end{center}

\vspace*{-.3cm}

\caption{Minimization of the maximum of 3 trigonometric polynomials on $[0,1]$: two-stage approach of~\cite{lasserre2011min} (left), one-stage primal-dual approach (middle), functions $v_j$, $j=1,2,3$ from the two-stage approach (right).
\label{fig:1dmax}}

\end{figure}

\begin{figure}
\begin{center}
\includegraphics[width=10.2cm]{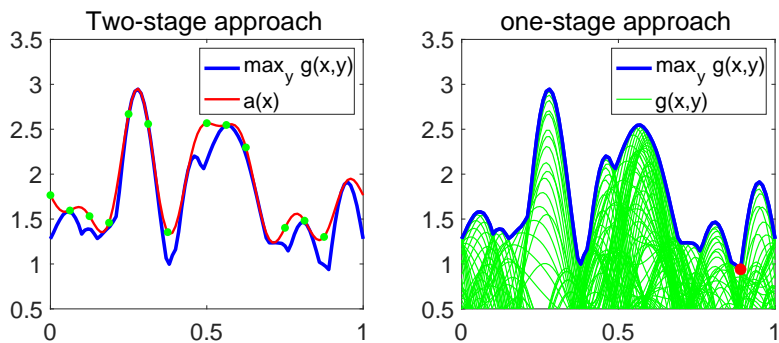}
\end{center}

\vspace*{-.3cm}

\caption{Minimization of the maximum of a bivariate trigonometric polynomial with $\X = \Y = [0,1]$. Two-stage approach of~\cite{lasserre2011min} (left), one-stage primal-dual approach (right).
\label{fig:1dpol}}

\end{figure}

\begin{figure}
\begin{center}
\includegraphics[width=10cm]{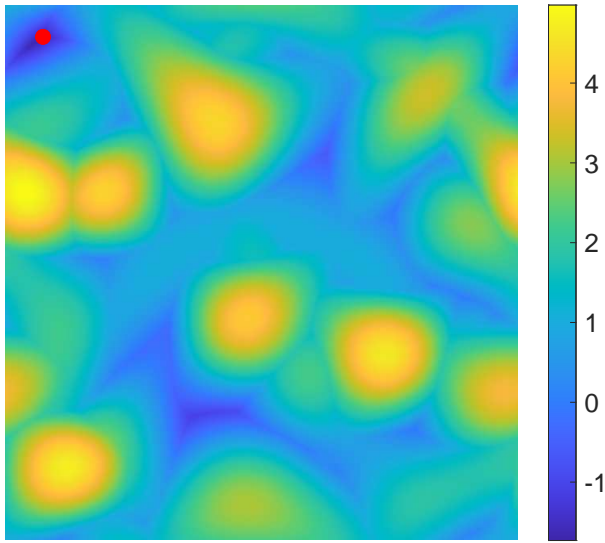}
\end{center}

\vspace*{-1cm}

\caption{Minimization of the maximum 4 trigonometric polynomials on $[0,1]^2$, with the maximizer in red.
\label{fig:2dmax}}

\end{figure}

\paragraph{Minimizing the maximum of univariate trigonometric polynomials.} See~\myfig{1dmax}, where we obtain a tight relaxation. We also plot the optimal function $v_j(x) = \varphi(x)^\top T_j \varphi(x)$, $j \in \{1,\dots,p\}$, which are non-negative and sum to one, and, at $x_\ast$, have non-zero values only for the $j$'s attaining the maximum in $\max_{j \in \{1,\dots,p\}} g_j(x_\ast)$.

\paragraph{Maximizing the maximum of bivariate trigonometric polynomials.} See~\myfig{2dmax} for an example with a tight relaxation.

 \paragraph{Min-max optimization of a trigonometric polynomial on $[0,1]^2$.} See~\myfig{1dpol} for an example with a tight relaxation.

 \section{Conclusion}
 In this paper, we proposed an SOS formulation for min-max problems over polynomials and provided a convergence proof when degrees of polynomials are allowed to increase. This work opens up several avenues for future work, such as (a) 
infinite-dimensional extensions for smooth functions by adding proper regularization like done by~\cite{rudi2020finding} for plain minimization, (b) finding sufficient conditions for either finite convergence or an explicit rate, and (c) exploring how the min-max approach relates to the several Positivstellensatz from the literature.

 \subsection*{Acknowledgements}
The author thanks Didier Henrion for helpful discussions about Putinar feasibility certificates and Jean-Bernard Lasserre for insightful comments on an earlier version of this paper. The author acknowledges support
from the French government under the management of the Agence Nationale de la {Recher-che} as part of the ``Investissements d’avenir'' program, reference ANR-19-P3IA0001 (PRAIRIE 3IA Institute). This work was also supported by the European Research Council (grant SEQUOIA 724063).

\appendix

\section{Convergence rates of matrix-valued SOS}
\label{app:matrixSOS} 
We extend the proof of~\cite[Theorem 1]{bach2022exponential} to matrix-valued polynomials, using the same technique as~\cite{fang2021sum}, and following the notations of \cite{bach2022exponential} closely.

\begin{proposition}
\label{theo:nocond}
Let $r>0$ and $s \geqslant 3r$, and $\varepsilon(s) = \big[  \big( 1 - \frac{6r^2}{s^2} \big)^{-d} - 1 \big] \sim_{s \to +\infty}    \frac{ 6 r^2 d }{s^2}$.
For any multivariate matrix-valued trigonometric polynomial $f$ of degree less than $2r$, written
$f(x) = \sum_{\| \omega \|_\infty \leqslant 2r} \hat{f}(\omega) e^{2i\pi \omega^\top x}$,
\[
\forall x \in [0,1]^d, f(x) \succcurlyeq \varepsilon(s) \sum_{\| \omega \|_\infty \leqslant 2r, \ \omega \neq 0}\!\! \| \hat{f}(\omega)  \|_{\rm op} \ \Rightarrow \  f  \ \mbox{is a sum of squares of polynomials of degree } s.
\]
\end{proposition}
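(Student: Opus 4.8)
The plan is to follow the integral-operator argument of \cite[Theorem 1]{bach2022exponential}, lifting it to matrix-valued polynomials exactly in the manner of \cite{fang2021sum}. Write $C := \sum_{0 < \|\omega\|_\infty \leqslant 2r} \|\hat f(\omega)\|_{\rm op}$, so that the hypothesis reads $f(x) \succcurlyeq \varepsilon(s)\, C\, \idm$ for all $x \in [0,1]^d$; in particular $f \succcurlyeq 0$ everywhere. I would aim to produce a non-negative trigonometric kernel $\kappa$ on $[0,1]^d$ that is a \emph{single} Hermitian square of a polynomial of degree $\leqslant s$, together with a Hermitian-matrix-valued trigonometric polynomial $\rho$ of degree $\leqslant 2r$ with $\rho(t) \succcurlyeq 0$ for all $t$, such that $\kappa \ast \rho = f$ (torus convolution). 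The structure of the argument then forces $f$ to be a matrix sum of squares of polynomials of degree $\leqslant s$.

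First I would recall from \cite{bach2022exponential} the univariate kernel: a real, even trigonometric $\kappa_1$ with $\int_0^1 \kappa_1 = 1$ (hence $\hat\kappa_1(0) = 1$ and $|\hat\kappa_1| \leqslant 1$), which is a Hermitian square $\kappa_1 = |h_1|^2$ with $\deg h_1 \leqslant s$, and whose low frequencies are pinched as $\hat\kappa_1(\omega) \geqslant 1 - \frac{6r^2}{s^2}$ for $|\omega| \leqslant 2r$ --- a Jackson / de la Vall\'ee Poussin-type kernel (the plain Fej\'er kernel only gives $1 - O(r/s)$, which is too weak). Then I would tensorize, $\kappa(u) := \prod_{j=1}^d \kappa_1(u_j) = |h|^2$ with $h(u) = \prod_{j=1}^d h_1(u_j)$ of degree $\leqslant s$, so that $\hat\kappa(\omega) = \prod_{j=1}^d \hat\kappa_1(\omega_j)$ and therefore $\big(1 - \frac{6r^2}{s^2}\big)^d \leqslant \hat\kappa(\omega) \leqslant 1$ for $\|\omega\|_\infty \leqslant 2r$. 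This is where $s \geqslant 3r$ enters: it gives $\frac{6r^2}{s^2} \leqslant \frac{2}{3} < 1$, so $\varepsilon(s) = \big(1 - \frac{6r^2}{s^2}\big)^{-d} - 1$ is finite and $\hat\kappa$ does not vanish on the frequencies $\|\omega\|_\infty \leqslant 2r$.

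Next I would establish the key lemma --- the one place the matrix case genuinely departs from the scalar one --- that convolution with $\kappa$ sends everywhere-PSD trigonometric polynomials to matrix sums of squares. Writing $h(x - t) = v(x)^\top d(t)$, where $v(x)$ is the vector of exponential monomials $e^{2i\pi\alpha^\top x}$, $\|\alpha\|_\infty \leqslant s$, and $d(t)$ a coefficient vector depending on $t$, one has $|h(x - t)|^2 = v(x)^* D(t)\, v(x)$ with $D(t) \succcurlyeq 0$ of rank one for every $t$; hence for any trigonometric polynomial $\rho$ with $\rho \succcurlyeq 0$ pointwise,
\[
(\kappa \ast \rho)(x) = \int_{[0,1]^d} |h(x-t)|^2\, \rho(t)\, dt = \big(v(x) \otimes \idm_p\big)^{\!*} \Big( \int_{[0,1]^d} D(t) \otimes \rho(t)\, dt \Big) \big(v(x) \otimes \idm_p\big),
\]
and $\int D(t) \otimes \rho(t)\, dt \succcurlyeq 0$ because each integrand is a Kronecker product of positive semidefinite matrices. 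This is the matrix lift of the integral-operator trick of \cite{fang2021sum}, and it is the only step that uses pointwise positivity of the data.

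Finally I would choose $\rho$ by $\hat\rho(\omega) = \hat\kappa(\omega)^{-1} \hat f(\omega)$ for $\|\omega\|_\infty \leqslant 2r$ and $\hat\rho(\omega) = 0$ otherwise: this is well defined, of degree $\leqslant 2r$, Hermitian-matrix-valued (since $\hat\kappa(-\omega) = \hat\kappa(\omega) > 0$ there), and the Fourier multipliers cancel so $\kappa \ast \rho = f$. It then remains to check $\rho \succcurlyeq 0$ pointwise: since $\hat\rho(0) = \hat f(0)$ and, for $0 < \|\omega\|_\infty \leqslant 2r$, $\|\hat\rho(\omega) - \hat f(\omega)\|_{\rm op} = \big(\hat\kappa(\omega)^{-1} - 1\big)\|\hat f(\omega)\|_{\rm op} \leqslant \varepsilon(s)\,\|\hat f(\omega)\|_{\rm op}$ (using $0 < \hat\kappa(\omega) \leqslant 1$ and $\hat\kappa(\omega) \geqslant (1+\varepsilon(s))^{-1}$), summing over frequencies gives $\|\rho(x) - f(x)\|_{\rm op} \leqslant \varepsilon(s)\, C$, whence $\rho(x) \succcurlyeq f(x) - \varepsilon(s) C\, \idm \succcurlyeq 0$ by the hypothesis; the lemma then shows $f = \kappa \ast \rho$ is a matrix sum of squares of polynomials of degree $\leqslant s$. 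The main obstacle is the univariate kernel of the second step: building a non-negative trigonometric polynomial that is at once a Hermitian square of degree $\leqslant s$ and has its first $2r$ Fourier coefficients pinched to $1 - O(r^2/s^2)$ with the precise constant $6$ --- this is the delicate estimate already carried out in \cite[Theorem 1]{bach2022exponential}, so the genuinely new work here is the tensorization of its Fourier multiplier and the Kronecker-product argument lifting ``convolution preserves SOS'' to the matrix-valued setting along the lines of \cite{fang2021sum}.
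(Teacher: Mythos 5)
Your proposal is correct and follows essentially the same route as the paper's proof: convolve with the squared modulus of a degree-$s$ trigonometric polynomial whose tensorized Fej\'er--Jackson-type Fourier multiplier is pinched to within $(1-6r^2/s^2)^d$ of $1$ on the low frequencies, invert the multiplier on the spectrum of $f$ to define the preimage, bound the resulting perturbation in operator norm by $\varepsilon(s)\sum_{\omega\neq 0}\|\hat f(\omega)\|_{\rm op}$, and conclude positivity of the preimage from the hypothesis. The only difference is cosmetic: you spell out the Kronecker-product argument showing that convolution with a Hermitian square carries pointwise-PSD matrix polynomials to matrix sums of squares, a step the paper states as being true ``by design''.
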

\begin{proof}
 We consider the following integral operator on $1$-periodic  matrix-valued functions on $ [0,1]^d$, defined as 
 \BEQ
 \label{eq:Th}
 Th(x) = \int_{[0, 1]^d} |q(x-y)|^2 h(y) dy,
 \EEQ
 for a well-chosen $1$-periodic function $ q$ which is a trigonometric polynomial of degree~$ s$. 
 {The function $x \mapsto |q(x-y)|^2$ is an element of the finite-dimensional cone of SOS polynomials of degree $s$, thus,} by design, if $ h$ has positive semi-definite values, then $ Th$ is a sum of squares of matrix polynomials of degree less than~$ s$. We will find $ h$ such that $ Th = f  $.
 
 In the Fourier domain, since convolutions lead to pointwise multiplication and vice-versa, we have for all $\omega \in \zb^d$, where $\hat{q} \ast \hat{q}(\omega)$ is a shorthand for $(\hat{q} \ast \hat{q})(\omega)$ : 
\[
\widehat{Th}(\omega) = \hat{q} \ast \hat{q}(\omega) \cdot \hat{h}(\omega),
\]
 and thus, the candidate $ h$ is defined by its Fourier series, which is equal to zero for $ \| \omega\|_\infty > 2r$, and to 
 \[ \frac{\hat{f}(\omega) }{ \hat{q}  \ast \hat{q}(\omega)}\]
  otherwise. If we impose that $ \hat{q}  \ast \hat{q}(0)=1$, we then have 
\BEAS
   f  -  h &  = &    \sum_{\omega  \in \zb^d} \hat{f}(\omega) \Big(  1  - \frac{1}{\hat{q} \ast \hat{q}(\omega)} \Big) \exp( 2i \pi \omega^\top \cdot) =    \sum_{\omega  \neq 0 } \hat{f}(\omega) \Big(  1  - \frac{1}{\hat{q} \ast \hat{q}(\omega)} \Big) \exp( 2i \pi \omega^\top \cdot) .\EEAS
 We then get:
\BEQ
\label{eq:Ap} \sup_{x \in [0,1]^d} \| f(x)  - h(x)\|_{\rm op}
 \leqslant    \sum_{\omega  \neq 0} 
\big\| \hat{f}(\omega)  \big\|_{\rm op}
\cdot  \max_{ \|\omega\|_\infty \leqslant 2r}  \Big|   \frac{1}{\hat{q} \ast \hat{q}(\omega)}  \, - 1\Big| .\EEQ

With the choice 
$\ds  \hat{q}(\omega) = a \prod_{i=1}^d \Big( 1 - \frac{|\omega_i|}{s} \Big)_+, $
with $a$ a normalizing constant, we get $ \hat{q}\ast \hat{q}(0)=1$ and
$ \max_{ \|\omega\|_\infty \leqslant 2r}  \big|   \frac{1}{\hat{q} \ast \hat{q}(\omega)}  \, - 1\big| \leqslant \varepsilon(s)$ (see~\cite{bach2022exponential} for details). Thus, for all $x \in [0,1]^d$, using \eq{Ap} and the assumption on $f$:
\[
h(x) = f(x) - ( f(x) - h(x)) \succcurlyeq 
 \varepsilon(s) \sum_{ \omega \neq 0} \| \hat{f}(\omega)  \|_{\rm op}
 -  \varepsilon(s) \sum_{ \omega \neq 0} \| \hat{f}(\omega)  \|_{\rm op} = 0,
\]
which leads to the desired result.
 \end{proof}

 \section{Alternating optimization for the two-stage approach}
 \label{app:alternate}
 In this section, we explore briefly the possibility evoked in \mysec{sosex} of trying to minimize \eq{jb} with respect to $\Sigma$ as well. This is a non-convex problem, and alternating optimization has a particularly simple formulation.
  Indeed, in the kernelized version in \eq{sosexk}, this corresponds to replacing $\mu$ by the previous value of $\alpha$ and iterating. Since the first upper-bound is minimized exactly, at the second iteration and all later ones, the matrix $\Sigma$ corresponds to a Dirac measure, and the upper-bounding polynomial is so that its value at this point is minimized.  This is shown empirically in \myfig{lasserre_alt}: even in the good attraction basin, the alternating optimization does not lead to the global optimum.

   \begin{figure}
\begin{center}
 \includegraphics[width=12cm]{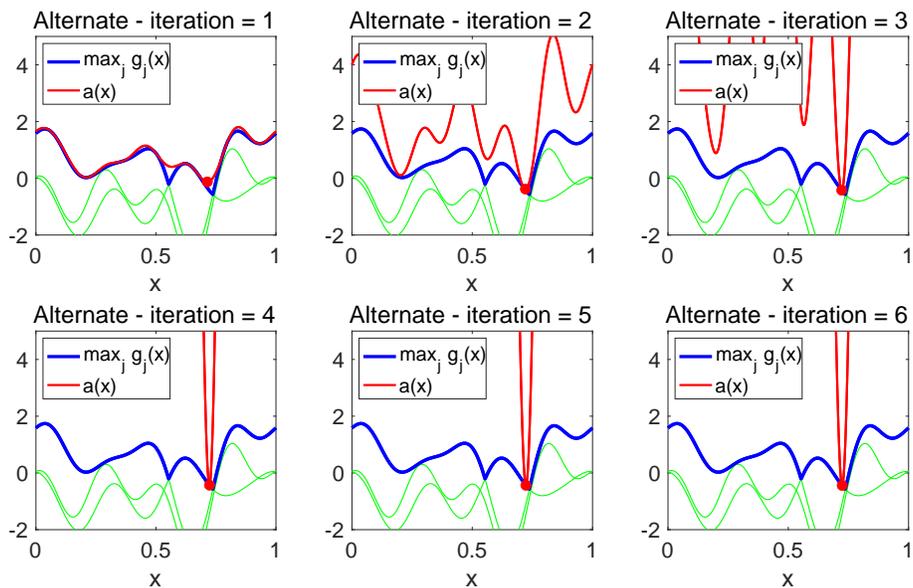}
\end{center}
\vspace*{-.45cm}

 \caption{Two-stage approach for trigonometric polynomials in one dimension, with alternating optimization and $r=2$, with  6 iterations.
 \label{fig:lasserre_alt}}
 \end{figure}

  \bibliography{SOS_min_max}

 \end{document}